\newcommand{\R}{\mathbb R}
\newcommand{\dd}[1]{\frac{\partial}{\partial #1}}
\newcommand{\E}{\mathcal{E}}
\newcommand{\JT}[1]{J^{#1}(\R,\R)}
\newcommand{\sym}{\operatorname{sym}}
\newcommand{\Ker}{\operatorname{Ker}}
\newcommand{\Fol}{\operatorname{Fol}}
\newtheorem{lem}{Lemma}
\newtheorem{thm}{Theorem}
\newtheorem{cor}{Corollary}
\newtheorem{prop}{Proposition}
\newtheorem{defin}{Definition}
\theoremstyle{remark}
\newtheorem{rem}{Remark}
\begin{document}

\title{Equivalence of variational problems of higher order}
\author{Boris Doubrov \and Igor Zelenko}
\address{Boris Doubrov, Department of Applied Mathematics and Computer Science, Belarussian State University, Nezavisimosti Ave.~4, Minsk 220030, Belarus}
\email{doubrov@islc.org}
\address{Igor Zelenko, Department of Mathematics, Texas A$\&$M University, College Station, TX 77843-3368, USA}
\email{zelenko@math.tamu.edu}
\subjclass[2000]{58A30, 53A55, 34C14.}
\keywords{divergent equivalence of Lagrangians, vector distributions, variational ODEs, curves in projective spaces, Wilczynski invariants, Legendre transform}

\begin{abstract}
We show that for $n\ge 3$ the following equivalence problems are essentially the same:  the equivalence problem for Lagrangians of order $n$ with one dependent and one independent
variable considered up to a contact transformation, a multiplication by a nonzero constant, and modulo divergence; the equivalence problem for the special class of rank 2 distributions associated with underdetermined ODEs $z'=f(x,y,y', \dots, y^{(n)})$; the equivalence problem for variational ODEs of order $2n$. This leads to new results such as the fundamental system of invariants for all these problems and the explicit description of the maximally symmetric models. The central role in all three equivalence problems is played by the geometry of self-dual curves in the projective space of odd dimension up to projective transformations via the linearization procedure (along the solutions of ODE or abnormal extremals of distributions). More precisely, we show that an object from one of the three equivalence problem
is maximally symmetric if and only if all curves in projective spaces obtained by the linearization procedure
are rational normal curves.

\end{abstract}
\maketitle

\section{Introduction: three equivalence problems}
\setcounter{equation}{0}
\setcounter{thm}{0}
\setcounter{lem}{0}
\setcounter{prop}{0}
\setcounter{cor}{0}
\setcounter{rem}{0}

The main goal of this paper is to establish a tight relationship between the following three
local equivalence problems in differential geometry:
\begin{enumerate}
\item equivalence of variational problems of order $\ge3$ with one dependent and one independent
variable considered up to a contact transformation, a multiplication by a constant,  and modulo divergence;
\item equivalence of \emph{variational ODEs} of order $\ge 6$ up to contact transformations. By a variational ODE (called also variational with multiplier) we mean  an ODE which is contact equivalent to  an Euler-Lagrange equations for some Lagrangian.
\item equivalence of rank 2 distributions 
associated with underdetermined ODEs $z'=f(x,y,y', \dots, y^{(n)})$, $n\ge 3$.
\end{enumerate}
In particular, we shall show that equivalence problems (1), (2), and the equivalence problem for the particular class of rank 2 distributions mentioned in item (3) are essentially the same. In particular, there is a one-to-one
correspondence between equivalence classes of objects in all these problems.

The one-to-one correspondence (up to above equivalence relation) between Lagrangians of order $n\ge 2$ and their Euler--Lagrange equations was already established earlier in works of M.~Fels~\cite{fels} for $n=2$ and M.~Jur\'a\v s~\cite{juras} for $n\ge 3$. It is based on the characterization of variational ODEs in terms of the variational bicomplex given in~\cite{and}. We extend this correspondence to underdetermined ODEs and
the corresponding rank 2 vector distributions in the case $n\geq 3$. This allows us to apply the results of our previous works \cite{dz1,dz2}, where more general rank 2 distributions are treated, for the description of the unique maximally symmetric Lagrangian up to the considered equivalence relation (see also the discussions on various equivalence relations for variational problems at the end of subsection~\ref{varintro}). Note that the one-to-one correspondence between the equivalence problems (1) and (3) does not hold for $n=2$. 
For example, the
Lagrangian $(y'')^{1/3}\,dx$ is not equivalent to the most symmetric one $(y'')^2\,dx$, but the corresponding
underdetermined ODEs and rank 2 distributions are equivalent and have 14-dimensional Lie algebra $G_2$ as their symmetry.

The common feature of all three problems above is that they admit linearization, which reduces them in essence to the problem of equivalence of self-dual curves in odd-dimensional projective spaces up
to projective transformations. The invariants
of these curves in projective spaces, the Wilczynski invariants~\cite{wil}, produce the invariants of the original problem. The latter are called the generalized Wilczynski invariants.

In this work we exploit an alternative (a Hamiltonian) point of view on the variational problems, which comes from the Pontryagin Maximum Principle in Optimal Control. This point of view provides us with the Hamiltonian form of the Euler-Lagrange equation
and allows to construct a (generalized) Legendre transform that takes extremals of the
Lagrangian (or, in other words, the solutions of the corresponding Euler--Lagrange equation) to the
abnormal extremals of the corresponding optimal control problem. This immediately shows that the solution
space of any variational ODE carries a natural symplectic structure $\omega$. We show that the conformal class of this symplectic structure (i.e., all 2-forms $f\omega$ for non-vanishing functions $f$) can be recovered
only from the self-duality of all linearizations of the given ODE along its solutions. This, in its turn, can
be reformulated by vanishing of the generalized Wilczynski invariants (see~\cite{wil}) of odd order. It is easy to see that for $n\ge 2$ there is at most one (up to constant) closed 2-form in any given conformal class of non-degenerate 2-forms on a smooth manifold. This gives a `naive' prove that any variational ODE of order $\ge 4$ admits at most one Lagrangian up to a constant and divergence terms.

Another question we try to answer in this paper is whether invariants of the above three equivalence problems derived from the Wilczynski invariants of self-dual projective curves, provide the complete system of fundamental invariants. It has been known from~\cite{dou,douwil} that the answer to the similar question for arbitrary non-linear ODEs is negative. Namely, there exist non-trivial ODEs (i.e., equations, not equivalent to $y^{(n)}=0$ via contact transformations) such that all their linearizations are trivial and, thus, all their Wilczynski invariants vanish. However, surprisingly, for variational ODEs the answer is positive and Wilczynski invariants of even order provide the complete system of fundamental invariants for this class of equations.

We show that for variational ODEs the generalized Wilczynski invariants of even order (Wilczynski invariants of odd order automatically vanish due to the self-duality of the linearization) form a fundamental set of
contact invariants in the following sence. Any other differential invariant lies in a radical of a differential
ideal generated by these invariants. In particular, vanishing of this fundamental set of invariants implies that any other differential invariant of the variantional equation vanishes identically and the equation is contact equivalent to the tivial one. We note that for $n=3$ and $4$ these invariants do not generate the complete differential algebra of invariants by only differentiation and algebraic operations. In particular, for $n=3$ (or 6-th order variational ODEs $y^{(6)}=F(x,y,y',\dots,y^{(5)})$) the differential invariant $I=F_{45}$ satisfies a non-trivial qubic equation, whose coefficients are certain derivatives of the generalized Wilczynski invariant of order~$4$.

To summarize vanishing of generalized Wilczynski invariants of even order gives an explicit characterization
of the most symmetric models in all three equivalence problems provided that $n\ge 3$:
\begin{enumerate}
\item all variational ODEs with vanishing generalized Wilczynski invariants are contact equivalent to the trivial equation $y^{(2n)}=0$;
\item all Lagrangians with vanishing generalized Wilczynski invariants are equivalent to $(y^{(n)})^2\,dx$ modulo constant multiplier, contact transformations and divergence terms;
\item all underdetermined ODEs with vanishing generalized Wilczynski invariants equivalent to $z'=(y^{(n)})^2$.
\end{enumerate}

Let us we briefly outline each of equivalence problems (1)-(3).

\subsection{Equivalence of variational problems}
\label{varintro}
This paper deals with variational problems in one dependent and one independent
variable of arbitrary order. A variational problem is defined by a Lagrangian
$L=f(x,y,y',\dots,y^{(n)})\,dx$ or the corresponding functional $\int
f(x,y,y',\dots,y^{(n)})\,dx$.

Let us recall basic definitions from the geometry of variational problems. Let
$J^\infty=J^\infty(\R,\R)$ be an infinite jets of smooth functions $y(x)$. We
shall use the standard coordinate system $(x,y=y_0, y_1, y_2, \dots)$ on
$J^\infty$. Denote by $\theta_i= dy_i - y_{i+1}\,dx$, $i\ge 0$, the basis of
so-called \emph{contact forms} on $J^\infty$. The set of all exterior forms
$\Lambda(J^\infty)$ is naturally turned into the bi-graded algebra with:
\begin{align*}
\Lambda^{p}(J^\infty) &= \Lambda^{0,p} \oplus
\Lambda^{1,p-1},\quad\text{where}\\
\Lambda^{0,p} &= \langle \theta_{i_1} \wedge \dots \wedge \theta_{i_p}\rangle, p\ge 0; \\
\Lambda^{1,p-1} &= \langle \theta_{i_1} \wedge \dots \wedge
\theta_{i_{p-1}}\wedge dx\rangle, p\ge 1.
\end{align*}
The exterior derivative $d\colon \Lambda^{p}(J^\infty)\to
\Lambda^{p+1}(J^\infty)$ naturally splits into the sum $d=d_H+d_V$, where for
$\omega\in \Lambda^{p}(J^\infty)$ we have $d_H(\omega)\in \Lambda^{1,p}$ and
$d_V(\omega)\in \Lambda^{0,p+1}$.

We consider variational problems $\int f(x,y,y',\dots,y^{(n)})\,dx$ of
arbitrary order~$n$ up to the divergence equivalence and constant multiplier.
Namely, we say that two Lagrangians $L_1 = f_1(x,y,y',\dots,y^{(n)})\,dx$ and
$L_2= f_2(x,y,y',\dots,y^{(n)})\,dx$ are equivalent if there exists a contact
transformation $\phi\colon \JT{1}\to \JT{1}$ with the prolongation $\Phi\colon
\JT{\infty}\to \JT{\infty}$, such that
\begin{equation}\label{Lequiv}
\Phi^*(L_2)=\alpha L_1 + d_H(\mu) \mod \langle \theta_i \mid i\ge 0\rangle
\end{equation}
for some constant nozero $\alpha\in\R$ and function $\mu$ on $\JT{\infty}$. We shall
always assume that all our Lagrangians are non-degenerate, i.e., they are
non-linear in the highest derivative. It follows from~\cite{juras} that two
Lagrangians are equivalent under the above equivalence relation if and only if
their Euler--Lagrange equations are contact equivalent.

The variational equivalence problem was treated in a number of papers using
both naive approach and Cartan's equivalence
method~\cite{bryant,ko1,ko2,ko3,moor,gonz}. See also~\cite{olver} for the
symmetry classification of higher order Lagrangians.

We note that usually slightly different equivalence notion of divergence
equivalence is considered, where the constant $\alpha$ above is equal to~$1$
identically. The upper bound for the variational symmetry algebra in case of
$n$-order Lagrangian was proved to be equal to $2n+3$ for $n\ge2$ in the work
of Gonzalez-Lopez~\cite{gonz}. Note that this upper bound is achieved in
case of a family of non-equivalent Lagrangians, which is different from most of
the classical local equivalence problems in differential geometry. We show in
this paper, that adding this constant $\alpha$ in the definition~\eqref{Lequiv}
of the divergence equivalence changes this patten. In this case we get a
slightly higher upper bound equal to $2n+5$ with a unique maximally symmetric
Lagrangian equivalent to $L=(y^{(n)})^2\,dx$.

\subsection{Equivalence of rank 2 vector distributions}
By a rank 2 vector distribution on a smooth manifold $M$ we understand a
two-dimensional subbundle $D$ of the tangent vector bundle $TM$. We define its
(small) derived flag $\{D^i\}$ as follows:
\begin{align*}
D^1&=D,\\
D^{i+1} &= D^i + [D,D^i],\quad i\ge 1,
\end{align*}
and assume that the distribution $D$ is regular in a sense that all $D^i$ are
smooth subbundles of the tangent bundle $TM$. We shall also assume in the
sequel that the distribution $D$ is \emph{completely non-holonomic},
i.e.~$D^n=TM$ for some sufficiently large $n$.

We say that two such distributions $D$ and $D'$ on manifolds $M$ and $M'$ are
(locally) equivalent if there exists a (local) diffeomorphism $\phi\colon M\to
M'$, such that $\phi_*(D)=D'$.

Equivalence problem for non-holonomic distributions is an old problem, which
goes back to the end of 19th century and was studied by various mathematicians
including Lie, Goursat, Darboux, Engel, Elie Cartan and others. Except for
several cases such as rank 2 distributions on 3- and 4-dimensional manifolds,
generic rank 2 distributions have functional, and, thus, non-trivial
differential invariants. In his classical paper~\cite{cartan10} Elie Cartan
associates a $(2,5)$-distribution to a system of partial differential equations
of second order and constructs a canonical coframe for non-degenerate
distributions of this type. This is a first example of an explicit solution for
the equivalence problem of vector distributions with non-trivial functional
invariants. Remarkably, the most symmetric $(2,5)$-distributions form one
equivalence class and have an exceptional Lie algebra $G_2$ as their symmetry
algebra.

The obvious (but very rough in the most cases) discrete invariant of a
distribution $D$ at a point $q$ is the so-called \emph{small growth vector}
(s.g.v.) at $q$. It is the tuple $\{\dim D^j(q)\}_{j\in{\mathbb N}}$, where
$D^j$. Furthermore, at each point $q\in M$, where $\dim D^j$ are locally
constant for any $j$, we can consider the graded space
$\mathfrak{m}_q=\sum_{i\ge 1} D^{j+1}(q)/D^j(q)$. It can be naturally equipped
with a structure of a graded nilpotent Lie algebra and it is called a
\emph{symbol} of the distribution $D$ at a point $q$. The notion of symbol is
extensively used in works of N.~Tanaka and his school
(\cite{tan1,tan2,mori,yamag}) who systematized and generalized the Cartan
equivalence method.

However, all constructions of Tanaka theory strongly depend on the algebraic
structure of the symbol and they were carried out under the very restrictive
assumption that symbol algebras are isomorphic at different points. An
alternative approach for studying rank 2 distributions was presented by the
authors in~\cite{dz1,dz2}. It is based on the ideas of the geometric control
theory and uses a symplectification of the problem by lifting the
distribution~$D$ to the cotangent bundle. This provides an
effective way to construct a canonical coframe and, thus, solve equivalence
problem for rank~2 distributions of so-called \emph{maximal class} (this notion is defined in section \ref{lindist}).

Rank 2 distributions of a special type are naturally associated with Lagrangians.
Namely, to a variational problem with a Lagrangian $f(x,y,y',\dots,y^{(n)})\,dx$ one can assign the following (affine) control system:
\begin{equation}\label{control}
\begin{aligned}
\dot x(t)&=1\\
\dot y_i(t)&=y_{i+1}(t),\quad 0\leq i\leq n-1\\
\dot y_n(t)&=u(t)\\
\dot z(t)&=f(x(t),y_0(t),y_1(t),\dots,y_n(t)),
\end{aligned}
\end{equation}
on $\JT{n}\times \mathbb R $ with coordinates $(x,y_0,\ldots y_n,z)$, where $u(\cdot)$ is a control function belonging say to the space $L_\infty$. To any point $q_0\in \JT{n}\times \mathbb R$ and a control function $u(\cdot)$ consider the trajectory of the system \eqref{control} started at $q_0$. Such trajectory is called an \emph {admissible trajectory of control system \eqref{control} } and its velocity at $q_0$
is called \emph{an admissible velocity of control system \eqref{control} at $q_0$}.
Taking the linear span of all admissible velocities of \eqref{control} we get
the rank 2 distribution on $\JT{n}\times \mathbb R$ generated by the following two vector fields:
\begin{equation}
\label{basis}
\begin{aligned}
X_1 &= \dd{x} + y_1\dd{y_0} + \dots + y_n\dd{y_{n-1}} + f\dd{z},\\
X_2 &= \dd{y_n}.
\end{aligned}
\end{equation}
We say that this rank 2 distribution is \emph{associated with the Lagrangian $f(x,y,y',\dots,y^{(n)})\,dx$ or with underdetermined
differential equation}
\begin{equation}\label{eq1}
z'=f(x,y,y',\dots,y^{(n)})
\end{equation}
 on two unknown functions $y(x)$ and $z(x)$. Such underdetermined ODEs and the related
geometric structures have been extensively studied in~\cite{hilbert,goursat,chras2,nurow,andkru}.

It is easy to see that if $\frac{\partial^2 f}{\partial y_n^2}\ne0$ (i.e. the Lagrangian satisfies the Legendre condition), then $\dim
D^2 = 3$, $\dim D^i=i+2$ for $i=3,\dots,n+1$. The case, when $f$ is linear with
respect to $y_n$ is special, since in this case we can reduce the corresponding underdetermined
equation to the equation of the same type, but of lower order. So, in the
sequel we shall always assume that function $f$ in the right hand of the
equation~\eqref{eq1} is always non-linear with respect to $y^{(n)}$. Note that such rank 2 distributions are of maximal class.
Finally we cite the main result of \cite{dz1,dz2} that will be needed in the sequel:

\begin{thm}
\label{thmdz}
For any $(2,n+3)$-distribution, $n>2$, of maximal class
there exists a canonical frame on a
$(2n+5)$-dimensional bundle over $M$.
The group of symmetries of such
distribution is at most $(2n+5)$-dimensional. Any
$(2,n+3)$-distribution of maximal class with
$(2n+5)$-dimensional group of symmetries is locally
equivalent to the distribution, associated with the
Lagrangian $\bigl(y^{(n)}(x)\bigr)^2$. The
algebra of infinitesimal symmetries of this distribution is
isomorphic to a semidirect sum of $\mathfrak{gl}(2,\mathbb
R)$ and $(2n+1)$-dimensional Heisenberg algebra ${\mathfrak
n}_{2n+1}$.
\end{thm}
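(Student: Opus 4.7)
The plan is to implement the symplectification program sketched in the introduction, which reduces the equivalence problem for $D$ to the projective equivalence problem for a distinguished family of curves in $\RP^{2n+1}$, where Wilczynski's moving frame is available.

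First I would lift $D$ to $T^*M$ and restrict attention to the annihilator subbundle $(D^2)^{\perp}\subset T^*M$ with the zero section removed. The pullback of the canonical symplectic form $\om=d\lambda$ to $(D^2)^{\perp}$ is pre-symplectic of corank one, and its characteristic line field integrates to the abnormal extremals of $D$. This construction is intrinsic, so any equivalence of distributions carries abnormal extremals to abnormal extremals, and the resulting foliation is itself a canonical invariant of $D$.

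Along each abnormal extremal $\gamma$, I would use the derived flag $D=D^1\subset D^2\subset\dots\subset D^{n+1}=TM$ to build, in the symplectic reduction transverse to $\gamma$, a curve of osculating subspaces in a $(2n+2)$-dimensional symplectic vector space. The maximal class hypothesis ensures this curve is non-degenerate in $\RP^{2n+1}$, and the symplectic pairing forces it to be self-dual. The Wilczynski invariants of these curves then pull back to differential invariants of $D$, and all of them vanish exactly when every such curve is (projectively) a rational normal curve. Next I would assemble the canonical coframe: Wilczynski's theory attaches to each self-dual curve in $\RP^{2n+1}$ a canonical moving frame, unique up to a finite group of ambiguities. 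Tracking these ambiguities as $\gamma$ varies in $M$, together with the parameter along $\gamma$ and a scaling of the symplectic form, gives a principal bundle of total dimension $2n+5$ carrying a canonical coframe. Since any symmetry of $D$ lifts to an automorphism of this bundle preserving the coframe, the upper bound $2n+5$ on the dimension of the symmetry algebra is immediate, and equality forces all Wilczynski invariants to vanish. Rigidity of the rational normal curve then identifies the resulting flat distribution with the rank $2$ distribution associated to the Lagrangian $(y^{(n)})^2\,dx$, and a direct computation yields the symmetry algebra $\mathfrak{gl}(2,\R)\ltimes\mathfrak{n}_{2n+1}$, with $\mathfrak{gl}(2,\R)$ acting on $\mathfrak{n}_{2n+1}$ through the $(n+1)$-st symmetric power of its defining representation.

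The main obstacle is the construction of the canonical frame intrinsically under the full equivalence pseudogroup rather than merely under transformations preserving auxiliary data. The reduction from $D$ to the family of Wilczynski curves involves several choices (a parameter along each abnormal extremal, a scaling of the symplectic form, a trivialization of the relevant line bundle along $\gamma$), and one must check that the Wilczynski normalizations absorb all these choices simultaneously and compatibly as $\gamma$ varies in $M$. Once this compatibility is established, the upper bound on symmetry and the identification of the maximally symmetric model follow from the classical theory of self-dual projective curves.
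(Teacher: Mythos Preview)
The paper does not prove this theorem at all: the sentence immediately preceding it reads ``Finally we cite the main result of \cite{dz1,dz2} that will be needed in the sequel,'' and no argument is given. So there is no proof in the paper to compare against; your outline is an attempt to reconstruct the argument of the cited references, and in broad strokes it does follow the symplectification scheme used there.

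That said, two points deserve correction. First, the dimensions are off: on $\mathbb P(D^2)^\perp\setminus\mathbb P(D^3)^\perp$ the even-contact distribution $\Delta$ has rank $2n+1$, so after quotienting by the characteristic line the Jacobi curves live in the projectivization of a $2n$-dimensional symplectic space, i.e.\ in $\RP^{2n-1}$, not $\RP^{2n+1}$.

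Second, and more substantively, your route to the maximally symmetric model has a gap. You argue that maximal symmetry forces all generalized Wilczynski invariants to vanish, and then invoke ``rigidity of the rational normal curve'' to conclude the distribution is flat. But the implication ``all Jacobi curves are rational normal $\Rightarrow$ $D$ is equivalent to the flat model'' is precisely the open \textbf{Question} posed in Section~\ref{lindist} of the paper for general $(2,n+3)$-distributions of maximal class; the paper only establishes it (Theorems~\ref{Wilthm} and~\ref{basiclem}) for distributions associated with Lagrangians, using the correspondence with variational ODEs. The argument actually used in \cite{dz1,dz2} for the maximally symmetric case is different: once a canonical coframe exists on a $(2n+5)$-dimensional bundle, a $(2n+5)$-dimensional symmetry group must act simply transitively there, so all structure functions of the coframe are constant and the geometry is homogeneous; one then identifies the unique homogeneous model directly. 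This bypasses the Wilczynski-vanishing question entirely.
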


\subsection{Equivalence of ordinary differential equations}
\label{ODEsubsec}
We shall also consider the equivalence problem of scalar ordinary differential
equations of the form
\begin{equation}
\label{ODE}
y^{(N+1)} = F(x,y,y',\dots,y^{(N)}).
\end{equation}
Each such equation can be considered as a hypersurface $\E$ in the jet space
$\JT{N+1}$. We shall always assume that our equations are solved with respect
to the highest derivative, so that the restriction of the natural projection
$\pi_{N+1,N}\colon \JT{N+1}\to\JT{N}$ to the hypersurface $\E$ is a diffeomorphism.

Two such equations $\E$ and $\E'$ are said to be \emph{contact equivalent}, if
there exists a contact transformation $\Phi\colon \JT{1}\to\JT{1}$ with a
prolongation $\Phi^{N+1}$ to $\JT{N+1}$ such that $\Phi^{N+1}(\E)=\E'$.

The equivalence problem of ordinary differential equations under contact and
point transformations is yet another classical subject going back to the works of
Lie, Tresse, Elie Cartan~\cite{cartan24}, Chern~\cite{chern50},
M.~Fels~\cite{fels} and others. The complete solution for the equivalence
problem was obtained in~\cite{dkm} based on the Tanaka theory of geometric
structures on filtered manifolds~\cite{tan1,tan2,mori}.

Explicit formulas for the basis in the differential algebra of contact
invariants of a single ODE of arbitrary order were computed by
B.~Doubrov~\cite{dou}. In particular, a part of these invariants comes from the
linearization of the given ODE. In fact, they coincide with classical
Wilczynski invariants of linear differential equations formally applied to the
linearization of a non-linear ODE. See~\cite{douwil} for more details.

In this paper we are mainly interested in a special class of ordinary
differential equations, consisting of equation which are contact equivalent to Euler--Lagrange equations of
variational problems. Such equations are usually called variational (with multiplier)
and have been studied in many papers~\cite{and,juras,fels}.

From the general result of Anderson and Thompson~\cite[Theorem 2.6]{and} it is known that
a scalar ordinary differential equation of order $2n$ is variational, if and only if there exists a
closed 2-form:
\begin{equation}\label{omega_and}
\omega = \sum_{i=0}^{n-1}\sum_{j=i+1}^{2n-i-1}A_{i,j}\theta_i\wedge\theta_j
\end{equation}
where $A_{n-1,n}\ne0$ and $\theta_i=dy_i - y_{i+1}dx$ are contact forms on the jet space $\JT{2n}$ restricted to the equation $\E = \{ y_{2n} = F(x,y_0,\dots,y_{2n-1}) \}$.

\subsection{Equivalence of curves in projective spaces}
Surprisingly, the central role in all three equivalence problems is played by the geometry
of self-dual curves in the projective space of odd dimension up to projective transformations.

Let $\gamma\subset P^{N}$ be an arbitrary curve in the projective space. We shall always assume that
$\gamma$ is strongly regular, i.e. its flag of osculating spaces does not have any singularities. In particular, $\gamma$ itself does not lie in any proper linear subspace of the projective space. We shall not assume any
distinguished parameter on $\gamma$, though there is always a distinguished family of
so-called projective parameters on $\gamma$.

Let $t$ be an arbitrary parameter on $\gamma$ and let $e_0(t)$ be such curve in $\R^{N+1}$ that $\gamma(t) = \R e_0(t)$. Define $e_i(t) = e_0^{(i)}(t)$. Then \emph{$i$-th osculating space $\gamma^{(i)}(t)$} of the curve $gamma$ at a point $\gamma(t)$ is defined as:
\begin{equation}
\label{osc}
\gamma^{(i)}(t) = \langle e_0(t), \dots, e_i(t) \rangle,\quad i=0,\dots, N.
\end{equation}
It is easy to see that it does not depend on the choice of the parameter $t$ and the curve $e_0(t)$. The
$(N-1)$-st osculating spaces $\gamma^{(N-1)}$ define the curve in the dual projective space $P^{N,*}$, which is
called a dual curve and is denoted by $\gamma^*$. We shall call a curve $\gamma$ \emph{self-dual}, if there
exists a projective mapping $P^{N}\to P^{N,*}$ that maps $\gamma$ to $\gamma^*$ so that any point $x\in\gamma$ is mapped to the point in  $P^{N,*}$ annihilating the $(N-1)$-st osculating space $\gamma^{(N-1)}$ to $\gamma$ at $x$ .
We summarize the properties of self-dual curves in the following proposition. For the proofs we refer to the classical book of Wilczynski~\cite{wil}):

\begin{prop}
\label{selfdual}
\begin{enumerate}
\item
If the mapping  $P^{N}\to P^{N,*}$ sending $\gamma$ to $\gamma^*$ exists, then it is unique, up to a constant nonzero factor. It defines, a unique, up to a constant nonzero factor, non-degenerate
bilinear form $\beta$ on the vector space $\R^{N+1}$.  Moreover, this form is necessarily
skew-symmetric if $N$ is odd and symmetric, if $N$ is even.
\item
In case of odd  $N$ the curve $\gamma$ is  self-dual if and only if there exists a non-degenerate skew-symmmetric (i.e. symplectic) form on $\mathbb R^{N+1}$ such that all osculating spaces $\gamma^{(N+1)/2}(t)$ are Lagrangian with respect to this form.
\end{enumerate}
\end{prop}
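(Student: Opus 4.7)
My plan is to recast the projective mapping $P^N\to P^{N,*}$ as a non-degenerate bilinear form $\beta$ on $\R^{N+1}$, well-defined up to a nonzero scalar, so that the self-dual condition on $\gamma$ translates to the vanishing relations $\beta(e_0(t),e_i(t))=0$ for $i=0,1,\dots,N-1$, valid for all $t$. For the uniqueness clause of part~(1), if two such forms $\beta_1,\beta_2$ are given, then $\beta_2\circ\beta_1^{-1}\colon(\R^{N+1})^*\to(\R^{N+1})^*$ is a linear isomorphism whose projectivization fixes every point of the dual curve $\gamma^*$; since $\gamma^*$ is non-degenerate (inheriting this from $\gamma$), any such projective transformation must be the identity, forcing $\beta_2=c\beta_1$ for some nonzero $c\in\R$.

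To pin down the symmetry type I differentiate $\beta(e_0(t),e_i(t))=0$ successively in $t$. A short induction on $j$ yields the triangular pattern $\beta(e_j,e_i)=0$ whenever $i+j\le N-1$, together with the anti-diagonal formula $\beta(e_j,e_{N-j})=(-1)^j\beta(e_0,e_N)$ for $j=0,\dots,N$. In particular $\beta(e_i,e_0)=0$ for $i\le N-1$ as well, so the transpose $\beta^T$ satisfies the same defining relations and, by the uniqueness just proved, $\beta^T=c\beta$ for some constant $c$. Applying this identity twice forces $c^2=1$, and the relation $\beta(e_N,e_0)=(-1)^N\beta(e_0,e_N)$, with $\beta(e_0,e_N)\ne 0$ forced by non-degeneracy of $\beta$, pins down $c=(-1)^N$, giving skew-symmetry for odd $N$ and symmetry for even $N$.

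For part~(2), note that when $N$ is odd the osculating subspace $\gamma^{((N-1)/2)}(t)$ has dimension $(N+1)/2$, exactly half of $\dim\R^{N+1}$, so the Lagrangian condition is meaningful. If $\gamma$ is self-dual with associated symplectic form $\om$, the triangular vanishing $\om(e_j,e_i)=0$ for $i+j\le N-1$ established above specializes to all $i,j\le (N-1)/2$ and shows each $\gamma^{((N-1)/2)}(t)$ is Lagrangian. Conversely, if a non-degenerate skew-symmetric form $\om$ makes every $\gamma^{((N-1)/2)}(t)$ Lagrangian, so that $\om(e_i(t),e_j(t))=0$ for $i,j\le (N-1)/2$ and all $t$, then differentiating in $t$ and inducting on $i+j$ (using the skew-symmetry of $\om$ to discard tautological identities) yields $\om(e_i,e_j)=0$ for all $i+j\le N-1$; the subcase $i=0$ is exactly the self-duality condition for $\gamma$.

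The main technical obstacle is the bookkeeping in these repeated differentiation arguments, especially in the converse direction of part~(2): the skew-symmetry of $\om$ makes many of the differentiated equations tautological, so one must carefully identify at each stage of the induction which particular derivative of a known vanishing relation actually produces the new vanishing needed to advance the argument.
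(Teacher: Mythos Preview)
The paper does not actually prove this proposition: it states the result and refers to Wilczynski's book for the proofs. Your self-contained argument is correct and follows the classical route---recast the duality as a bilinear form, use non-degeneracy of $\gamma^*$ for uniqueness, and differentiate the relations $\beta(e_0,e_i)=0$ to obtain the triangular vanishing pattern together with the anti-diagonal identity $\beta(e_j,e_{N-j})=(-1)^j\beta(e_0,e_N)$, from which the symmetry type follows via $\beta^T=c\beta$. (Incidentally, your correction of the index in part~(2) to $(N-1)/2$ is right: $\gamma^{(i)}$ has dimension $i+1$, so the half-dimensional osculating space is $\gamma^{((N-1)/2)}$.)

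One small sharpening for the converse of part~(2): once level $s-1$ is known to vanish, differentiation gives $\om(e_i,e_{s-i})=-\om(e_{i-1},e_{s-i+1})$, so all entries at level $s$ agree up to sign and you need a single zero seed at each level $s\le N-1$. For even $s$ the seed $\om(e_{s/2},e_{s/2})=0$ is skew-symmetry; for odd $s\le N-2$ the seed $\om(e_{(s-1)/2},e_{(s+1)/2})=0$ comes from the Lagrangian hypothesis (both indices are $\le (N-1)/2$), not from skew-symmetry. Your closing paragraph shows you are aware of this bookkeeping issue, but making the odd-level seed explicit would remove any ambiguity. Alternatively, inducting on $\max(i,j)$ rather than on $i+j$ avoids the case split altogether.
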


The invariants of projective curves (up to projective transformations) were also described by Wilczynski~\cite{wil}. The algebra of all invariants admits a basis of so-called fundamental invariants $W_3, \dots, W_{N+1}$ of order $3,\dots,N+1$ respectively. They can be constructed as follows. As above, let
$e_0(t)$ be a curve in $\R^{N+1}$ such that $\gamma = \R e_0(t)$. If the curve $\gamma$ is strongly linear, then the vectors $e_i(t) = e_0^{(i)}(t)$, $i=0,\dots,N$ form a so-called moving frame along $\gamma$. Then the next derivative $e_N'(t)=e_0^{(N+1)}(t)$ can be uniquely expressed as a linear combination of vectors in this frame. In other words, we have a well-defined linear homogeneous differential equation on $e_0(t)$:
\begin{equation}\label{def_eqn}
e_0^{(N+1)} = p_N(t) e_0^{(N)} + \dots + p_0(t) e_0.
\end{equation}
Since $e_0(t)$ is defined up to a scale, we can always fix this scaling factor by the condition $p_N(t)=0$. It is easy to see that this defines $e_0(t)$ uniquely up to a contact non-zero scale. Next, by reparametrizing the curve $e_t(0)$, i.e., by changing $t$ to $\bar t = \lambda(t)$ we can also achieve $p_N(t)=p_{N-1}(t)=0$. This fixes a parameter $t$ up to projective reparametrizations $\bar t = (at+b)/(ct+d)$. Wilczynski proves that taking linear combinations of the derivatives of the remaining coefficients $p_i(t)$, $i=0,\dots,N-2$ we can form $(N-2)$ (relative) invariants of the curve $\gamma$ under the group of projective transformations:
\[
W_k = \sum_{j=1} ^{k-2} (-1)^{j+1}\frac{(2k-j-1)!(N-k+j)!}{(k-j)!(j-1)!}p^{(j-1)}_{N-k+j},\quad k=3,\dots,N+1.
\]
Following Wilczynski, we shall say that an invariant $W_i$ has order $i$, $i=3,\dots,N+1$. Wilczynski proves that any other projective invariant of $\gamma$ can be expressed as a function of invariants $W_i$ and their derivatives. He also shows that in case of odd $N$ the curve $\gamma$ is self-dual if and only if all invariants of \emph{odd} order vanish identically. Note also that all  Wilczynski invariants of a curve $\gamma\subset P^N$ vanish if and only if $\gamma$ is a rational normal curve, i.e. it can be represented as $t\mapsto [1:t:\ldots:t^N]$ in some homogeneous coordinates of $P^N$.

Self-dual curves $\gamma$ in odd-dimensional projective spaces appear naturally in the above
equivalence problems via the linearization procedure (see section \ref{linsect} for more detail).
The linearization of ODE along  a solution assigns  a curve in projective space to the solution via identification of linear equations with curves in projective space. If the ODE is variational, then the corresponding curves in projective space are self-dual. In the case of rank 2 distributions it is not immediately clear what is the analog of solutions and what is the linearization procedure. This becomes clear if one considers distributions as the constraints for a  variational problem and use the Pontryagin Maximum Principle: the analogs of solutions of ODE's are so-called abnormal extremals of the distribution and the linearization of the flow of abnormal extremal leads to the notion of Jacobi curves introduced in~\cite{zel06,dz1,dz2}, which essentially are (or generated by) self-dual curves in a projective space. In particular, the invariants of these curves define the invariants of the original equivalence problems. For example, as shown in~\cite{zel06c}, the fundamental invariant $W_4$ of self-dual curves in $\mathbb{RP}^3$ can be identified with the so-called fundamental tensor of rank 2 vector distributions in 5 dimensional spaces discovered by E.~Cartan~\cite{cartan10}.

\section{Variational problems and rank 2 vector distributions}
\setcounter{equation}{0}
\setcounter{thm}{0}
\setcounter{lem}{0}
\setcounter{prop}{0}
\setcounter{cor}{0}
\setcounter{rem}{0}

The aim of this section is to establish the correspondence between variational
problems of order $n\ge3$ and special rank 2 vector distributions associated
with the underdetermined ordinary differential equations~\eqref{eq1} of order $n$.

\begin{lem}\label{lem1}
Let $D$ be the rank $2$ distribution associated with to the underdetermined
$z'=f(x,y,y',\dots,y^{(n)})$  of order $n\ge3$. Then the space of all infinitesimal symmetries of $D$ lying in $D^3$
is one-dimensional (over $\R$) and is
generated by the vector field $Z=\dd{z}$.
\end{lem}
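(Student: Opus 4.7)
The plan is to unwind the symmetry condition $[Y,X_i]\in\Gamma(D)$ ($i=1,2$) in the coordinate frame $X_1,X_2$ of~\eqref{basis} and reduce the assertion to a simple elimination argument. Abbreviate $f_i=\dd{y_i}f$. A direct bracket computation gives
\[
[X_1,X_2]=-\dd{y_{n-1}}-f_n\dd{z},\qquad [X_2,[X_1,X_2]]=-f_{nn}\dd{z},
\]
so by the Legendre condition $f_{nn}\ne 0$ the field $\dd{z}$ lies in $D^3$. Reducing $[X_1,[X_1,X_2]]$ modulo fields already obtained supplies $-\dd{y_{n-2}}$ (this is where $n\ge 3$ enters), yielding
\[
D^3=\Bigl\langle X_1,\,X_2,\,\dd{y_{n-2}},\,\dd{y_{n-1}},\,\dd{z}\Bigr\rangle.
\]
Since $f$ is independent of $z$, the field $Z=\dd{z}$ commutes with both $X_1$ and $X_2$, so $Z\in\Gamma(D^3)$ is an infinitesimal symmetry of $D$; it remains to show every such symmetry is a constant multiple of $Z$.

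Next, I would write an arbitrary section of $D^3$ uniquely as
\[
Y=aX_1+bX_2+c\dd{y_{n-2}}+e\dd{y_{n-1}}+d\dd{z}
\]
and impose $[Y,X_1],[Y,X_2]\in\Gamma(D)$. A routine computation shows that the components of $[Y,X_1]$ in the directions $\dd{y_{n-3}},\dd{y_{n-2}},\dd{y_{n-1}}$, and $\dd{z}$ are, respectively, $c$, $e-X_1(c)$, $b-X_1(e)$, and $bf_n+cf_{n-2}+ef_{n-1}-X_1(d)$. Since each of these directions is transverse to $D$, all four must vanish; cascading these equations yields $c=e=b=0$ and $X_1(d)=0$. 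Analogously, $[Y,X_2]\in\Gamma(D)$ produces $a+\dd{y_n}e=0$ and $af_n+\dd{y_n}d=0$, whence $a=0$ and $\dd{y_n}d=0$.

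Finally, I would show that $d$ is constant. With $\dd{y_n}d=0$, $d$ depends only on $x,y_0,\dots,y_{n-1},z$, and $X_1(d)=0$ reads
\[
d_x+y_1d_{y_0}+\dots+y_nd_{y_{n-1}}+f\,d_z=0.
\]
Differentiating twice in $y_n$ and invoking $f_{nn}\ne 0$ gives $d_z=0$; a further differentiation in $y_n$ then yields $d_{y_{n-1}}=0$. The identity reduces to $d_x+\sum_{i=0}^{n-2}y_{i+1}d_{y_i}=0$, in which the coefficient of $y_{n-1}$ is $d_{y_{n-2}}$, forcing $d_{y_{n-2}}=0$. Iterating the same elimination successively in $y_{n-2},\dots,y_1$ gives $d_{y_i}=0$ for all $i$, and finally $d_x=0$; thus $d$ is constant and $Y=d\cdot Z$, proving the lemma. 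The only subtle point is this last step: without the Legendre condition $f_{nn}\ne 0$ the term $fd_z$ would be at most linear in $y_n$ and could be absorbed into $y_n d_{y_{n-1}}$, so the non-linearity of $f$ in $y_n$ is essential.
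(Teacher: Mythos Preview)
Your proof is correct and follows essentially the same route as the paper's: compute $D^3$, write a general section $Y$ in the frame $X_1,X_2,\dd{y_{n-1}},\dd{y_{n-2}},\dd{z}$, and read off from $[Y,X_1],[Y,X_2]\in\Gamma(D)$ that all coefficients except $d$ vanish while $X_1(d)=X_2(d)=0$. The only difference is in the final step: the paper observes that $d$ is then a first integral of the completely non-holonomic distribution $D$ and hence constant, whereas you unwind this directly by successive differentiation in $y_n,y_{n-1},\dots$ --- a harmless and self-contained alternative (your phrase ``a further differentiation in $y_n$'' for recovering $d_{y_{n-1}}=0$ is slightly imprecise, since you are really returning to the once-differentiated equation after setting $d_z=0$, but the logic is sound).
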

\begin{proof}
Let us show that $\sym(D)\cap D^3$ is one-dimensional (over $\R$) and is
generated by the vector field $\dd{z}$. Indeed, it is easy to check that the
space $D^3$ is generated by the vector fields:
\begin{align*}
X_1 &= \dd{x} + y_1\dd{y_0} + \dots + y_n\dd{y_{n-1}} + f\dd{z},\\
X_2 &= \dd{y_n},\quad X_3 = \dd{y_{n-1}},\quad X_4 = \dd{y_{n-2}},\quad X_5 = \dd{z}.
\end{align*}
Let $Y=\sum_{i=1}^5 a_iX_i$ be an arbitrary vector field lying in $D^3$. Then
we have
\begin{multline*}
[X_1, Y] = [X_1, \sum_{i=2}^5 a_iX_i] \mod D = \\
\big(X_1(a_3)-a_2\big)\dd{y_{n-1}} +
\big(X_1(a_4)-a_3)\dd{y_{n-2}}-a_4\dd{y_{n-3}}+X_1(a_5)\dd{z} \mod D.
\end{multline*}
Thus, the condition $Y\in \sym(D)$ implies that $a_2=a_3=a_4=0$ and
$X_1(a_5)=0$. Further, we have
\[
[X_2, Y] = X_2(a_5)\dd{z} - a_1\dd{y_{n-2}} \mod D.
\]
Again, the condition $Y\in \sym(D)$ implies that $a_1=0$ and $X_2(a_5)=0$. In
particular, we see that the function $a_5$ is a first integral of the
distribution $D$. But since $D$ is completely non-holonomic, $a_5$ should be a
constant. This completes the proof of the lemma.
\end{proof}
\begin{cor}\label{lem1cor}
Let $z'=f_i(x,y,y',\dots,y^{(n)})$, $i=1,2$, be two underdetermined
differential equations of order $n\ge3$, and let $D_i$, $i=1,2$, be the
corresponding rank 2 vector distributions on $\R^{n+3}$. Suppose that
distributions $D_1$ and $D_2$ are locally equivalent. Then the equivalence
mapping $\phi$ maps vector field $\dd{z}$ to $c\dd{z}$ for some constant
$c\in\R^*$.
\end{cor}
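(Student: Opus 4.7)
The corollary is an almost immediate consequence of Lemma \ref{lem1}. My plan is to use the naturality of the constructions $D\mapsto D^3$ and $D\mapsto \sym(D)$ under diffeomorphisms, combined with the uniqueness statement from the lemma.

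First I would observe that a local diffeomorphism $\phi\colon \R^{n+3}\to \R^{n+3}$ realizing the equivalence $\phi_*(D_1)=D_2$ automatically satisfies $\phi_*(D_1^k)=D_2^k$ for every $k\ge 1$, since the derived flag is defined intrinsically from $D$ via iterated Lie brackets. In particular $\phi_*(D_1^3)=D_2^3$. Likewise, $\phi_*$ carries infinitesimal symmetries of $D_1$ to infinitesimal symmetries of $D_2$: if $[Y,\Gamma(D_1)]\subset \Gamma(D_1)$ then $[\phi_*Y,\Gamma(D_2)]\subset \Gamma(D_2)$. Combining these two facts, $\phi_*$ restricts to an $\R$-linear isomorphism
\[
\phi_*\colon \sym(D_1)\cap D_1^3 \longrightarrow \sym(D_2)\cap D_2^3.
\]

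Now I apply Lemma \ref{lem1} to both $D_1$ and $D_2$ (both are associated to underdetermined ODEs of the same order $n\ge 3$, so the hypothesis applies). The lemma asserts that each of $\sym(D_i)\cap D_i^3$ is a one-dimensional real vector space generated by $\partial/\partial z$. Since $\phi_*$ is an $\R$-linear isomorphism between two one-dimensional $\R$-vector spaces, and $\partial/\partial z$ generates the target, there must exist a nonzero constant $c\in \R^*$ such that
\[
\phi_*\Bigl(\dd{z}\Bigr)=c\,\dd{z}.
\]

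The only subtlety worth double-checking is that the generator in Lemma \ref{lem1} is indeed normalized by a real constant (not by a function): this is precisely what the final step of the proof of Lemma \ref{lem1} establishes, since the coefficient $a_5$ turns out to be a first integral of the completely non-holonomic distribution $D$ and hence constant. No further obstacle arises; the corollary follows directly.
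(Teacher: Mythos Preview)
Your argument is correct and is precisely the reasoning the paper leaves implicit: the corollary is stated without proof, as an immediate consequence of Lemma~\ref{lem1}, and your write-up simply spells out the naturality of $D\mapsto D^3$ and $D\mapsto\sym(D)$ under diffeomorphisms together with the one-dimensionality over $\R$ established in the lemma.
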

Let us identify the space $\R^{n+3}$ with the direct product of $\JT{n}$ with
the coordinates $(x,y_0,\dots,y_n)$ and $\R$ with the coordinate $z$ and
consider any equivalence mapping $\phi$ as a mapping from $\JT{n}\times\R$ to
itself. Then Lemma~\ref{lem1} implies that any such mapping $\phi$ has the form
\begin{equation}\label{eq2}
\phi\colon \JT{n}\times\R \to \JT{n}\times\R,\quad (p,z)\mapsto (\psi(p),
\alpha z + \mu(p)),
\end{equation}
where $\psi\colon \JT{n}\to\JT{n}$, $\alpha\in\R^*$ and $\mu$ is a smooth
function on $\JT{n}$.

\begin{lem}\label{l3}
Mapping $\psi$ is a contact transformation and the function $\mu$ does not
depend on $y_n$, i.e., it is a pull-back of the function on $\JT{n-1}$.
\end{lem}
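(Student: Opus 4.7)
The plan is to work with the annihilator of the distribution. The distribution $D_i$ is annihilated by the $(n+1)$-tuple of one-forms $\theta_0,\ldots,\theta_{n-1}$ and $\eta_i := dz - f_i\,dx$ (where $\theta_j = dy_j - y_{j+1}\,dx$), and the equivalence $\phi_*(D_1)=D_2$ translates into the statement that $\phi^*$ sends the Pfaffian module generated by $\{\theta_0,\ldots,\theta_{n-1},\eta_2\}$ into that generated by $\{\theta_0,\ldots,\theta_{n-1},\eta_1\}$. With $\phi(p,z)=(\psi(p),\alpha z+\mu(p))$ from Corollary \ref{lem1cor}, all pullbacks factor cleanly and both claims fall out in turn.

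First I would show that $\psi$ preserves the Cartan distribution $\mathcal{C}_n$ on $\JT{n}$. Since each $\theta_j$ depends only on the $\JT{n}$-coordinates, $\phi^*\theta_j = \psi^*\theta_j$ carries no $dz$-component. Expanding $\phi^*\theta_j$ in the basis $\{\theta_0,\ldots,\theta_{n-1},\eta_1\}$ therefore forces the coefficient of $\eta_1 = dz - f_1\,dx$ to vanish, so $\psi^*\theta_j \in \langle\theta_0,\ldots,\theta_{n-1}\rangle$. This is exactly the condition that $\psi$ is a symmetry of $\mathcal{C}_n$. Invoking the classical Lie--B\"acklund theorem (valid for $n\ge 2$), $\psi$ must be the prolongation of a contact transformation on $\JT{1}$.

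To conclude $\partial\mu/\partial y_n = 0$, I would expand
\[
\phi^*\eta_2 = \alpha\,dz + d\mu - (f_2\circ\psi)\,\psi^*dx
\]
in the basis $\{\theta_0,\ldots,\theta_{n-1},\eta_1\}$. Matching the $dz$-coefficient fixes the coefficient of $\eta_1$ to be $\alpha$, and rearranging gives
\[
d\mu + \alpha f_1\,dx - (f_2\circ\psi)\,\psi^*dx \;\in\; \langle\theta_0,\ldots,\theta_{n-1}\rangle.
\]
Since $\psi$ is a prolongation, $\psi^*x$ depends only on $(x,y_0,y_1)$, so $\psi^*dx$ carries no $dy_n$-component; neither do the $\theta_j$. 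Reading off the $dy_n$-coefficient on both sides then yields $\partial\mu/\partial y_n = 0$.

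The main delicate step is the appeal to B\"acklund's theorem, without which one cannot exclude a $dy_n$-contribution to $\psi^*dx$. An alternative route, avoiding B\"acklund, would be to verify directly that the Cauchy characteristic of $\mathcal{C}_n^2$ equals the line field $\langle\partial_{y_n}\rangle$ for $n\ge 3$; since $\psi$ preserves $\mathcal{C}_n$ (hence $\mathcal{C}_n^2$), this yields $\psi_*\partial_{y_n}\propto\partial_{y_n}$, and then examining the $\partial_x$-component of the inclusion $\phi_*\partial_{y_n}\in D_2$ gives the same conclusion.
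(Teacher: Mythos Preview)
Your proof is correct and follows essentially the same logic as the paper's, only phrased dually in terms of the annihilating Pfaffian system rather than vector fields: the paper pushes $D_i$ forward along the projection $\JT{n}\times\R\to\JT{n}$ to obtain the Cartan distribution (hence $\psi$ is contact), and then notes that $\phi_*\partial_{y_n}\in D_2$ forces $\mu_{y_n}=0$. Your explicit appeal to B\"acklund's theorem---needed so that $\psi^*dx$ carries no $dy_n$-component, or dually so that $\psi_*\partial_{y_n}\propto\partial_{y_n}$---makes precise a step the paper's one-line argument leaves implicit.
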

\begin{proof}
Since $\dd{z}$ is a symmetry of both distributions $D_1$ and $D_2$, we
can consider the direct images of these distributions with respect to
the natural projection $\JT{n}\times\R\to \JT{n}$. It is easy to see
that in both cases these images coinside with the contact distribution on
$\JT{n}$. This proves that $\psi$ is a contact transformation.

The second statement of the lemma on the function $\mu$ follows
immediately from the fact that both $D_1$ and $D_2$ contain the vector
field $\dd{y_n}$.
\end{proof}

\begin{thm}\label{thm_l2d} Suppose that $n\ge3$. Two vector distributions $D_1$ and $D_2$
associated
with Lagrangians
$L_i=f_i(x,y,y',\dots,y^{(n)}) dx$, $i=1,2$,
for $n\ge3$ are equivalent if and only if the Lagrangians $L_1$ and $L_2$
are equivalent.
\end{thm}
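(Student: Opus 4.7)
The plan is to realize both equivalences as identities on the same Pfaffian system. Starting from the basis~\eqref{basis}, one checks directly that the distribution $D_i$ associated with $L_i=f_i\,dx$ is exactly the common kernel on $\JT n\times\R$ of the contact forms $\theta_0,\dots,\theta_{n-1}$ and the single extra $1$-form $\eta_i:=dz-f_i\,dx$. Corollary~\ref{lem1cor} and Lemma~\ref{l3} already restrict any equivalence $\phi$ to the form $\phi(p,z)=(\psi(p),\alpha z+\mu(p))$ with $\psi\colon\JT n\to\JT n$ a prolonged contact transformation, $\alpha\in\R^*$, and $\mu$ a function on $\JT{n-1}$ --- exactly the data appearing in~\eqref{Lequiv}.

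For the forward direction, because $\psi$ is contact, $\phi^*\theta_j\in\langle\theta_0,\dots,\theta_{n-1}\rangle$ automatically, so the only nontrivial condition for $\phi^*$ to preserve the Pfaffian system is
\[
\phi^*\eta_2=\alpha\,dz+d\mu-(f_2\circ\psi)\,d(\psi^*x)\in\langle\theta_0,\dots,\theta_{n-1},\eta_1\rangle.
\]
The Lemma~\ref{l3} fact that $\mu$ is $y_n$-independent is crucial here: it lets $d\mu$ reduce cleanly modulo the contact ideal via the elementary identity $dg\equiv D_xg\,dx\pmod{\theta_0,\dots,\theta_{n-1}}$ valid for any function $g$ on $\JT{n-1}$, where $D_x$ is the total derivative. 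The condition then becomes
\[
\alpha\,dz+\bigl(D_x\mu-(f_2\circ\psi)\,D_x(\psi^*x)\bigr)\,dx\equiv\lambda\,\eta_1\pmod{\theta_i}
\]
for some $\lambda$. Matching coefficients of $dz$ forces $\lambda=\alpha$, and then comparing coefficients of $dx$ yields $(f_2\circ\psi)\,D_x(\psi^*x)=\alpha f_1+D_x\mu$; since the left-hand side is precisely the horizontal part of $\psi^*(f_2\,dx)$ modulo the contact ideal, this is exactly relation~\eqref{Lequiv}.

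For the converse I would run the same computation in reverse: given $(\psi,\alpha,\mu)$ witnessing~\eqref{Lequiv}, set $\phi(p,z):=(\psi(p),\alpha z+\mu(p))$, and the chain of identities shows $\phi^*\eta_2\equiv\alpha\eta_1\pmod{\theta_i}$; combined with $\phi^*\theta_j\in\langle\theta_0,\dots,\theta_{n-1}\rangle$ coming from contact invariance of $\psi$, this gives that $\phi^*$ preserves the annihilating Pfaffian system, and $\phi_*(D_1)=D_2$ follows from the fact that $\phi$ is a diffeomorphism and both distributions have rank $2$.

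The one potential pitfall is just to identify the dictionary between the two equivalences correctly: the constant $\alpha$ of~\eqref{Lequiv} is literally the same as the slope of $\phi$ in the $z$-variable, and $\mu$ is literally the same function on $\JT{n-1}$. Both identifications are exactly what Corollary~\ref{lem1cor} and Lemma~\ref{l3} deliver. Once this dictionary is in hand, the theorem reduces to the short computation modulo the contact ideal sketched above.
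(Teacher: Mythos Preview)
Your argument is correct and is exactly the content of the paper's one-line proof: it fleshes out the assertion that maps of the form~\eqref{eq2} subject to Lemma~\ref{l3} induce on the Pfaffian system $\langle\theta_0,\dots,\theta_{n-1},\,dz-f\,dx\rangle$ precisely the equivalence relation~\eqref{Lequiv}. The only point worth making explicit in the converse direction is that the $\mu$ appearing in~\eqref{Lequiv} is automatically a function on $\JT{n-1}$ (since both $\Phi^*L_2$ and $L_1$ live on $\JT n$, forcing $D_x\mu$ to have order $\le n$), so that $\phi(p,z)=(\psi(p),\alpha z+\mu(p))$ is well defined on $\JT n\times\R$.
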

\begin{proof}
It is easy to see that the transformations~\eqref{eq2} with mappings $\psi$ and
$\mu$ satisfying Lemma~\ref{l3} induce the same equivalence relation on
distributions $D_1$ and $D_2$ as the equivalence relation on Lagrangians $L_1$
and $L_2$ given by equation~\eqref{Lequiv}.
\end{proof}

As a direct consequence of Theorems \ref{thmdz} and \ref{thm_l2d}

\begin{thm}
\label{maxLagr}
The dimension of the group of variational symmetries of Lagrangian of order $n\geq 3$ does not exceed $2n+5$.
The Lagrangian with $(2n+5)$-dimensional group of variational symmetries is equivalent to the Lagrangian $(y^{(n)})^2$.
The algebra of infinitesimal symmetries of the latter  Lagrangian is isomorphic to a semidirect sum of $\mathfrak{gl}(2,\mathbb R)$ and $(2n+1)$-dimensional Heisenberg algebra ${\mathfrak n}_{2n+1}$.
\end{thm}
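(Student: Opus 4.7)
The plan is to invoke Theorem \ref{thm_l2d} to transfer the question from Lagrangians to the associated rank 2 distributions, and then to apply Theorem \ref{thmdz}. The key point is that the chain of results Lemma \ref{lem1}, Corollary \ref{lem1cor}, Lemma \ref{l3}, Theorem \ref{thm_l2d} produces not merely a bijection between equivalence classes, but an isomorphism of symmetry groups, so the dimension bound from Theorem \ref{thmdz} transports to the Lagrangian side.

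First I would upgrade Theorem \ref{thm_l2d} to the infinitesimal level. For a non-degenerate Lagrangian $L=f(x,y,\dots,y^{(n)})\,dx$ with associated rank 2 distribution $D_L$ on $\JT{n}\times\R$, the standing assumption $\partial^2 f/\partial y_n^2\ne 0$ ensures that $D_L$ is a $(2,n+3)$-distribution of maximal class (this is recorded just before Theorem \ref{thmdz}). I claim the Lie algebra of infinitesimal variational symmetries of $L$, defined as the infinitesimal version of \eqref{Lequiv}, is isomorphic to $\sym(D_L)$. The argument of Lemma \ref{lem1} is linear in the coefficients $a_i$ and uses only commutators with the generators $X_1,X_2$ of $D$; it therefore applies verbatim at the infinitesimal level and forces every $X\in\sym(D_L)$ to have the form $X=\tilde\psi+\alpha\,z\,\dd{z}+\tilde\mu\,\dd{z}$, where $\tilde\psi$ is an infinitesimal contact transformation of $\JT{n}$, $\alpha\in\R$ is a constant, and $\tilde\mu$ is a pullback of a function from $\JT{n-1}$ (the infinitesimal content of Lemma \ref{l3}). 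This triple is exactly the infinitesimal data entering the equivalence relation \eqref{Lequiv} specialized to self-equivalences, so it corresponds bijectively to infinitesimal variational symmetries of $L$.

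With the isomorphism $\sym(D_L)\cong \operatorname{sym}^{\mathrm{var}}(L)$ in hand, the theorem follows directly. Theorem \ref{thmdz} gives $\dim\sym(D_L)\le 2n+5$, whence $\dim\operatorname{sym}^{\mathrm{var}}(L)\le 2n+5$. If this bound is saturated then $D_L$ is locally equivalent to the distribution associated with $(y^{(n)})^2\,dx$ by Theorem \ref{thmdz}, and Theorem \ref{thm_l2d} then produces an equivalence in the sense of \eqref{Lequiv} between $L$ and $(y^{(n)})^2\,dx$. The structural description of the symmetry algebra as $\mathfrak{gl}(2,\R)\ltimes\mathfrak{n}_{2n+1}$ is inherited from Theorem \ref{thmdz}. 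The only nontrivial step is the infinitesimal upgrade of Theorem \ref{thm_l2d}, and as sketched above it is essentially automatic once the algebra in the proof of Lemma \ref{lem1} is reread as a computation on vector fields rather than on point transformations; no genuinely new obstacle appears.
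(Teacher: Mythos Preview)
Your proposal is correct and follows exactly the paper's approach: the paper states Theorem~\ref{maxLagr} simply as ``a direct consequence of Theorems~\ref{thmdz} and~\ref{thm_l2d}'' with no further argument. Your infinitesimal upgrade of Theorem~\ref{thm_l2d}, establishing the isomorphism $\sym(D_L)\cong\operatorname{sym}^{\mathrm{var}}(L)$ by rereading Lemmas~\ref{lem1} and~\ref{l3} at the level of vector fields, is precisely the detail the paper leaves implicit.
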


 From the proof of Lemma \ref{l3} it follows that rank 2 distribution associated with some Lagrangian $f(x,y,y',\dots,y^{(n)})\,dx$ with $\frac{\partial^2 f}{\partial y_n^2}\ne0$  can be described in the following coordinate free way:
\begin{prop}
A rank 2 distribution $D$ is associated with a Lagrangian $f(x,y,\dots,y^{(n)})\,dx$ with $\frac{\partial^2 f}{\partial y_n^2}\ne0$ in a neighborhood of a generic point if and only if
\begin{enumerate}
\item $\dim D^3=5$;
\item There exists an infinitesimal symmetry $X$ of $D$ lying in $D^3$
such that the factorization by the foliation of integral curves of $X$ sends $D$ to the Goursat distribution on the quotient manifold.
\end{enumerate}
\end{prop}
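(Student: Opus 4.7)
The forward direction (only if) is essentially immediate from the computations carried out above: for $D$ associated with $f\,dx$ with $f_{y_ny_n}\ne 0$ the small growth vector begins $(2,3,5,\ldots)$, so $\dim D^3=5$; the vector field $X=\partial/\partial z$ commutes with the frame \eqref{basis} and so is an infinitesimal symmetry; and a direct bracket computation using $f_{y_ny_n}\ne 0$ shows $X\in D^3$. The quotient by the flow of $\partial/\partial z$ is $\JT{n}$ equipped with the Cartan distribution, which is the canonical rank $2$ Goursat distribution with small growth vector $(2,3,\ldots,n+2)$.

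For the converse, my plan is as follows. Pick a generic point $q\in M$, let $X\in D^3$ be the symmetry provided by condition (2), and write $\pi\colon M\to\bar M$ for the local quotient by the flow of $X$. Because $\pi_*D$ is Goursat of rank $2$ one has $X\notin D$; and because $\pi_*(D^2)=[\pi_*D,\pi_*D]+\pi_*D$ must coincide with $\bar D^2$ of rank $3$ whereas $\dim D^2\le 3$ holds universally for any rank $2$ distribution, also $X\notin D^2$. Choose a local coordinate $z$ on $M$ with $X=\partial/\partial z$. By the classical Engel--von Weber normal form for Goursat distributions with growth vector $(2,3,\ldots,n+2)$, one can select coordinates $(x,y_0,\ldots,y_n)$ on $\bar M$ in which $\pi_*D$ becomes the Cartan distribution on $\JT{n}$. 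Lifting this frame, $D$ is spanned by
\[
X_1=\dd{x}+\sum_{i=0}^{n-1}y_{i+1}\dd{y_i}+g\dd{z},\qquad X_2=\dd{y_n}+h\dd{z},
\]
for some functions $g,h$ on $M$; the symmetry condition $[X,X_j]\in D$ then forces $g_z=h_z=0$, so $g,h$ are pulled back from $\bar M$.

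The final step is a vertical gauge transformation. Replacing $z$ by $z'=z+\phi(x,y_0,\ldots,y_n)$ with $\phi_{y_n}=-h$ (solvable by one quadrature in $y_n$) brings $X_2$ into the form $\partial/\partial y_n$, while $X_1$ becomes $\partial/\partial x+\sum_{i=0}^{n-1}y_{i+1}\partial/\partial y_i+f\,\partial/\partial z'$ for a suitable function $f$ of $(x,y_0,\ldots,y_n)$. This exhibits $D$ as the distribution associated with the underdetermined ODE $z'=f$, and a direct bracket computation (the same one already used in the paragraph preceding Theorem~\ref{thmdz}) identifies $\dim D^3=5$ with the Legendre condition $f_{y_ny_n}\ne 0$.

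The main obstacle is the transversality argument: condition (2) immediately gives $X\notin D$, but one also needs $X\notin D^2$ in order to read off $\pi_*(D^i)=\bar D^i$ in every degree and so to realize $\bar M$ genuinely as the $n$-th jet space $\JT{n}$ with its Cartan distribution. Once this transversality is established the remainder is a normal-form computation closely parallel to the proofs of Lemmas~\ref{lem1} and~\ref{l3}.
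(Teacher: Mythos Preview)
Your proof is correct and follows essentially the approach the paper intends. In the paper the proposition is stated without a detailed proof; it is simply asserted to follow ``from the proof of Lemma~\ref{l3}'', i.e., from the observation that $\partial/\partial z$ is a symmetry in $D^3$ and that the quotient by its flow recovers the contact (Cartan) distribution on $\JT{n}$. You have supplied exactly this argument with the details filled in, including the transversality $X\notin D^2$ (which, as you note, follows immediately from $\pi_*(D^2)=\bar D^2$ having rank $3$) and the vertical gauge normalizing $X_2$; there is no genuine obstacle here, and your worry in the final paragraph is unfounded.
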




\section{Two points of view on extremals of variational problems}
\setcounter{equation}{0}
\setcounter{thm}{0}
\setcounter{lem}{0}
\setcounter{prop}{0}
\setcounter{cor}{0}
\setcounter{rem}{0}

In this section we introduce abnormal extremals of rank 2 distributions and show how the flow of abnormal extremals of a distribution associated with a Lagrangian $L=f(x,y,y',\dots,y^{(n)})\,dx$ can be related to the flow of extremals of the corresponding variational problems. Speaking informally,  this relation is the relation between the Hamiltonian and the Lagrangian approach to variational problems and it is given by a kind of Legendre transform. The material of this section is pretty standard but, as we shall see in the next sections, it is very useful for the equivalence problem for Lagrangians and to our knowledge it was never used before in this kind of problems.

\subsection{Hamiltonian form of Euler-Lagrange equation}
\label{leg}
Recall that extremals of the Lagrangian $L$ are critical points of the corresponding
functional $L=\int f(x,y,y',\dots,y^{(n)})\,dx$.
On one hand, they are solutions of the Euler --Lagrange equation
\begin{equation}
\label{EL}
f_{y_0}-\frac{d}{dx}(f_{y_1}) + \dots + (-1)^n\frac{d^n}{d x^n}(f_{y_n}) = 0.
\end{equation}

If one takes a little bit more general point of view (that is standard in the Optimal Control Theory), then
the extremals can be also described using the notion of the end-point mappings associated with the corresponding control system
\eqref{control}.  Fix a point $q_0\in \JT{n}\times \mathbb R$ and a time $T>0$. \emph{The endpoint map $\mathcal F_{q_0, T}$} is the map from the space $L_\infty[0,T]$ to $\JT{n}\times \mathbb R$ sending a control function $u(t)$ to the point of the corresponding admissible trajectory of the system \eqref{control} at time $T$. Then $y(t)$ is an extremal of the Lagrangian $L$ if and only if \emph {the corresponding control $\bar u(t)=y^{(n+1)}(t)$ is a critical point of the endpoint map $\mathcal F_{q_0, T}$ for some $T>0$} (and therefore for any $T>0$ as long the corresponding trajectory is defined on $[0, T]$), where $q_0=(0, y(0),\ldots, y^n(0), z_0)$ and $z_0$ is an arbitrary constant. Take the admissible trajectory $q(t)$ of \eqref{control} corresponding to the control $\bar u(t)$ and starting at $q_0$. Then this trajectory can be lifted to the cotangent bundle $T^*(\JT{n}\times \mathbb R)$ by choosing for any $t\in [0, T]$ an appropriately normalized covectors $p(t)\in T_{q(t)}^*(\JT{n}\times \mathbb R)$ that annihilates the image of the differential
$d\mathcal F_{q_0, t}\bigl(\bar u(\cdot)\bigr)$ of the endpoint map $\mathcal F_{q_0, t}$ at $\bar u(\cdot)$.
This lifting constitutes one of the main fundamental ideas behind the Pontryagin Maximum Principle in Optimal Control (\cite{pbgmthe},\cite{agrsach}).
As a matter of fact, the curve $\bigl(p(t), q(t)\bigr)$ is an abnormal extremal of the affine control system \eqref{control} and also of the distribution associated with the Lagrangian $L$. This establish in essence the relation between extremals of the Lagrangian and the abnormal extremals of the corresponding distributions.

More precisely, the coordinates $q=(x,y_0,\ldots y_n, z)$ in $\JT{n}\times \mathbb R$ induce the coordinate system
\begin{equation}
\label{coordcot}
(p,q)=(\lambda,\xi_0,\ldots,\xi_n,\nu;\, x, y_0,\ldots y_n, z)
\end{equation}
in $T^*(\JT{n}\times \mathbb R)$ such that the covector $p\in T_q^*(\JT{n}\times \mathbb R)$ has the form
$p=\lambda\,dx+\sum_{i=0}^n \xi_i\, dy_i+\nu\,dz$.
Define the following families of scallar functions (Hamiltonians) $H_u$ on $T^*(\JT{n}\times \mathbb R)$:
\begin{equation}
\label{Hamu}
H_u(p,q)=\lambda+\sum_{i=0}^{n-1}\xi_i y_{i+1}+\xi_n u+\nu f(x,y,y',\dots,y^{(n)})
\end{equation}
According to the weak form of the Pontryagin Maximum Principle (where the maximality condition is replaced by the stationarity condition) on has the following
\begin{prop}
\label{abnprop}
A function $y(t)$ is an extremal of the Lagrangian $L$ if and only if for the admissible trajectory $q(t)$ of \eqref{control} corresponding to the control $\bar u(t)=y^{(n+1)}(t)$ and starting at
the point $q_0=(0, y(0),\ldots, y^{(n)}(0), z_0)$, where $z_0$ is an arbitrary constant, there exists a curve of nonzero covectors
$p(t)\in  T^*_{q(t)}(\JT{n}\times \mathbb R)$ such that
\begin{eqnarray}
&\frac{\partial}{\partial u}H_u(p(t),q(t))|_{u=\bar u(t)}=0 \,\,\text{a.e.}\,\, \Leftrightarrow \,\,\xi_n(t)\equiv 0 & \text{(the stationarity condition)} \label{max}\\
&H_{\bar u(t)}(p(t),q(t))\equiv 0 & \text{(the transversality condition)}\label{trans}\\
&\dot p(t)=-\frac{\partial}{\partial q}H_{\bar u(t)}(p(t),q(t)) &\text{(the adjoint equation)}\label{adj}
\end{eqnarray}
\end{prop}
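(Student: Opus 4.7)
The plan is to prove this as a concrete specialization of the weak Pontryagin Maximum Principle to the control system \eqref{control}, which amounts to rederiving the classical Ostrogradsky Hamiltonian reformulation of the Euler--Lagrange equation \eqref{EL}. Since ``$y(t)$ is an extremal of $L$'' is simply the statement that $y$ satisfies \eqref{EL}, the task reduces to showing that the existence of a nonzero covector curve $p(t) = (\lambda,\xi_0,\ldots,\xi_n,\nu)$ along $q(t)$ satisfying the stationarity \eqref{max}, transversality \eqref{trans}, and adjoint equation \eqref{adj} is equivalent to \eqref{EL}.

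For the forward direction, I would construct $p(t)$ explicitly. Set $\nu(t)\equiv -1$ and $\xi_n(t)\equiv 0$, and use the backward recursion $\xi_{i-1}:=\partial f/\partial y_i-\dot\xi_i$ forced by \eqref{adj} to define the classical Ostrogradsky momenta
\[
\xi_i(t)=\sum_{k=0}^{n-1-i}(-1)^k\frac{d^k}{dt^k}\frac{\partial f}{\partial y_{i+1+k}},\quad i=0,\ldots,n-1.
\]
By construction the adjoint equations for $\xi_1,\ldots,\xi_n$ and $\nu$ hold, and the one remaining adjoint equation $\dot\xi_0=\partial f/\partial y_0$ unravels, after the index change $j=k+1$, precisely to \eqref{EL}. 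Choose $\lambda(0)$ so that $H_{\bar u(0)}(p(0),q(0))=0$ and extend $\lambda$ along the trajectory by $\dot\lambda=\partial f/\partial x$. Since $\xi_n\equiv 0$ makes $H_u$ independent of $u$, Hamilton's equations give $\frac{d}{dt}H_{\bar u(t)}(p(t),q(t))\equiv 0$, so transversality follows from the chosen initial value.

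For the reverse direction, assume $p(t)$ is nonzero and satisfies \eqref{max}, \eqref{trans}, \eqref{adj}. The $z$-component of \eqref{adj} reads $\dot\nu=0$, so $\nu$ is a constant. The key subcase to rule out is $\nu=0$: then $\xi_n\equiv 0$ together with $\dot\xi_i=-\xi_{i-1}$ for $i=n,n-1,\ldots,1$ cascades to $\xi_{n-1}=\xi_{n-2}=\cdots=\xi_0=0$; combined with $\dot\lambda=0$ and transversality $H=\lambda=0$, this forces $p\equiv 0$, contradicting the non-triviality assumption. Hence $\nu\ne 0$, and rescaling $p$ by a nonzero constant we may assume $\nu=-1$. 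The adjoint equations then force the $\xi_i$ to coincide with the Ostrogradsky momenta above, and the last remaining equation $\dot\xi_0=\partial f/\partial y_0$ unfolds to \eqref{EL}.

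The only anticipated difficulty is bookkeeping: verifying by downward induction on $i$ that the Ostrogradsky recursion is consistent with $\dot\xi_i=-\xi_{i-1}-\nu\,\partial f/\partial y_i$, and that the leftover equation at $i=0$ is precisely \eqref{EL}. The only genuinely conceptual point is the exclusion of the abnormal case $\nu=0$, which is automatic here because the cascading structure of the adjoint system has only the trivial solution when $\nu=0$ and $\xi_n=0$.
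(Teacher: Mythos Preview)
Your proposal is correct and mirrors exactly what the paper does. The paper states Proposition~\ref{abnprop} as a direct consequence of the weak Pontryagin Maximum Principle and then, in the paragraphs that follow, carries out precisely the coordinate analysis you describe: it shows $\dot\nu=0$, rules out $\nu=0$ by the same cascading argument forcing $p\equiv0$, normalizes to $\nu=-1$, derives the Ostrogradsky momenta \eqref{Legj} from the adjoint equations \eqref{xidot}, and recovers the Euler--Lagrange equation \eqref{EL} from the $\dot\xi_0$ equation. Your forward direction, including the handling of $\lambda$ and the verification of transversality via $\frac{d}{dt}H_{\bar u}=0$, is slightly more explicit than what the paper spells out, but the substance is identical.
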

Note that another part of the Hamiltonian system
\begin{equation}
\label{traj}
\dot q(t)=\frac{\partial}{\partial p}H_{\bar u(t)}(p(t),q(t))
\end{equation}
 is exactly the system \eqref{control} with $u(t)=\bar u(t)$ i.e. it holds automatically. So equations \eqref{max}-\eqref{traj} can be considered as the \emph{Hamiltonian form of the Euler Lagrange equation.}. The curve $\bigl (p(t),q(t))\subset
 T^*\bigl(\JT{n}\times \mathbb R\bigr)$ satisfying  Proposition~\ref{abnprop} is called an \emph{abnormal extremal of affine control system \eqref{control}}. The term ``abnormal'' comes again from the Pontryagin Maximum Principle applied to a functional defined on the set of admissible trajectories of system \eqref{control}: abnormal extremals are exactly the Pontryagin extremals of this problem with
vanishing Lagrange multiplier near the functional (\cite{pbgmthe,agrsach}). Roughly speaking, the extremals of our original
variational problem given by the Lagrangian $L$ become abnormal extremals of the system \eqref{control}, because we include the Lagrangian $L$ into this system so that it appears as a part of the constraints.

Let us analyze the equations \eqref{adj} in coordinates \eqref{coordcot}. First of all, since the Hamiltonians \eqref{Hamu} do not depend on $z$ we have $\dot\nu=0$ i.e. $\nu$ is constant along an abnormal extremal. If $\nu\equiv 0$, then from other equations of \eqref{adj} and equation \eqref{trans} it follows that $p(t)\equiv 0$ but $p(t)$ can not vanish by Proposition~\ref{abnprop}. So the case $\nu\equiv 0$ is impossible.
Now assume that $\nu\neq 0$. From the homogeneity of the equations \eqref{adj} with respect to $p$ it follows that it is enough to consider the case when $\nu\equiv -1$.
Then, combining the stationarity condition \eqref{max} with the equation from \eqref{adj} regarding $\dot \xi_n$ we will get that
\begin{equation}
\label{Legn}
\xi_{n-1}=f_{y_n}
\end{equation}
Writing equations for others $\xi_j$ from \eqref{adj} we get
\begin{equation}
\label{xidot}
\begin{split}
\dot \xi_0 &= f_{y_0};\\
\dot \xi_j &= f_{y_j}-\xi_{j-1},\quad j=1,\dots,n-1;\\
\end{split}
\end{equation}
Combining \eqref{Legn}  and the equation in \eqref{xidot} corresponding $j=n-1$ we get $\xi_{n-2} = f_{n-1} - \frac{d}{dx}(f_n)$. Then using the second line of \eqref{xidot} by induction with respect to $j$ in the decreasing order, we get
\begin{equation}
\label{Legj}
\xi_{j-1}= \sum_{k=j}^n (-1)^{k-j}\frac{d^{k-j}}{dx^{k-j}}(f_{y_k}),\quad 1\leq j\leq n-1.
\end{equation}
Finally substituting \eqref{Legj} with $j=1$ into the first line of \eqref{xidot} we get the Euler-Lagrange equation \eqref{EL}, as expected.

If $X$ is a vector field without stationary points or a line disribution, denote by $\Fol(X)$ the one dimensional foliation of integral curves of $X$.
Consider a codimension $4$ submanifold $\mathcal H$ of $T^*\bigl (\JT{n}\times \mathbb R\bigr)$ given
by equations \eqref{max}, \eqref{trans}, \eqref{Legn}, and $\nu=-1$. It is foliated by abnormal extremals of system \eqref{control}
with $\nu=-1$.  Besides by constructions the group of translations along $z$-axis,  $z\mapsto z+c$, preserves
this foliation. Therefore this foliation induces the one-dimensional foliation on the quotient manifold
$\mathcal H/\Fol\left(\frac{\partial}{\partial z}\right)$ by the foliation $\Fol\left(\frac{\partial}{\partial z}\right)$ or , equivalently, on the manifold of the orbits of the group
of these translations. The tuple $(x,y_0,\ldots,y_n, \xi_0,\ldots, \xi_{n-2})$ constitute a coordinate system on
the manifold $\mathcal H/\Fol\left(\frac{\partial}{\partial z}\right)$. On the other hand, the Euler--Lagrange equation \eqref{EL} defines a codimension one submanifold $\E(L)$ of $\JT{2n}$ foliated by a one-
dimensional foliation of prolongations of its solutions to $\JT{2n}$, and the tuple $(x,y_0,\ldots,y_{2n-1})$
constitute a coordinate system on  $\E(L)$. This foliation is called the \emph{foliation of solutions} of the Euler-Lagrange equation.

By above, the map $\mathfrak L\colon \E(L)\mapsto \mathcal H/\Fol\left(\frac{\partial}{\partial z}\right)$, defined by
\begin{equation}
\label{Legcoord}
(x,y_0,\ldots,y_{2n-1})\mapsto (x,y_0,\ldots,y_n, \xi_0,\ldots, \xi_{n-2}),
\end{equation}
 with $\xi_j$ satisfying \eqref{Legj}, sends the one-dimensional foliation on $\mathcal {EL}$ to the one-dimensional foliation on $\mathcal H/\Fol\left(\frac{\partial}{\partial z}\right)$. In other words, this map transforms the extremals of our variational problem obtained in the Lagrangian form to the extremal obtained in the Hamiltonian form. Therefore we call it the \emph {(generalized) Legendre transform}. Note that the Legendre transform depend on the choice of coordinates on $\JT{0}=\R^2$, which induces the coordinates on $\JT{2n}$). Once we use the Legendre transform in the sequel it will mean that such choice is already done.

\subsection{Abnormal extremals of rank 2 distributions}
\label{abnsec}
Now we are going to describe abnormal extremals for a distribution $D$ on a manifold $M$. We shall use more geometric language.
Let $\pi: T^*M\mapsto M$ be the canonical projection.
For any
$\lambda\in T^*M$, $\lambda=(p,q)$, $q\in M$, $p\in
T_q^*M$, let $\mathfrak{s}(\lambda)(\cdot)=p(\pi_*\cdot)$
be the tautological Liouville $1$-form and $\sigma=d\mathfrak {s}$
be the standard symplectic structure on $T^*M$.
Denote
by $(D^l)^{\perp}\subset T^*M$ the annihilator of the $l$th power
$D^l$, namely
\begin{equation}
\label{annihil} (D^l)^{\perp}= \{(q,p)\in T^*M:\,\, p\cdot
v=0\,\,\forall v\in D^l(q)\}.
\end{equation}
Finally let $\mathcal S_0$ be the zero section of $T^*M$.  With this
notation the Pontryagin Maximum Principle in the coordinate-free form
(\cite{agrsach}) implies immediately the following description of abnormal extremals of the distribution $D$:

\begin{defin} An absolutely continuous curve $\gamma\subset T^*M$ is an
abnormal extremal of a distribution $D$ if the following two conditions holds:
\begin{itemize}
\item [1.]$\gamma\subset D^\perp\backslash {\mathcal S_0}$,
\item [2.]$\dot\gamma(t)$ belongs to ${\rm
Ker}\bigl(\sigma\bigl|_{D^\perp}\bigr.\bigr)$, i.e., to the kernel
of the restriction of the canonical symplectic form $\sigma$ to the
annihilator $D^\perp$ of $D$.
\end{itemize}
\end{defin}

From now on we will consider only rank $2$-distributions. From direct
computations \cite[Proposition 2.2]{zel99} it follows that ${\rm
Ker}\bigl(\sigma(\lambda)\bigl|_{D^\perp}\bigr.\bigr)\neq 0$ if and only if
$\lambda\in (D^2)^\perp$. This implies the following
characterization of abnormal extremals of rank 2 distribution.

\begin{prop}
An absolutely continuous curve  $\gamma\subset T^*M$ is abnormal
extremal of a rank $2$ distribution $D$ with $\dim D^2=3$ if and only if the
following two conditions holds
\begin{itemize}
\item [1.]$\Gamma\subset (D^2)^\perp\backslash {\mathcal S_0}$,
\item [2.]$\dot\Gamma(t)$ belongs to $\Ker\bigl(\sigma\bigl|_{(D^2)^\perp}\bigr.\bigr)$, i.e., to the
kernel of the restriction of the canonical symplectic form $\sigma$
to the annihilator $(D^2)^\perp$ of $D^2$.
\end{itemize}
\end{prop}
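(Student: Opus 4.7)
The proof strategy exploits the fact cited from \cite{zel99} just before the proposition: for a rank-$2$ distribution $D$ with $\dim D^2=3$, the restriction $\sigma(\lambda)|_{T_\lambda D^\perp}$ has nontrivial kernel precisely when $\lambda\in (D^2)^\perp$. Both implications will then follow by combining this with an elementary comparison of the characteristic distributions of $D^\perp$ and $(D^2)^\perp$.

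For the forward direction (definition $\Rightarrow$ proposition), recall that abnormal extremals are by convention non-constant, so $\dot\gamma(t)\ne 0$ almost everywhere. At each such $t$, condition 2 of the definition produces a nontrivial vector in $\Ker(\sigma|_{D^\perp})(\gamma(t))$, and the cited fact then forces $\gamma(t)\in (D^2)^\perp$. Since $(D^2)^\perp\subset T^*M$ is closed and $\gamma$ is continuous, we obtain $\gamma\subset (D^2)^\perp\setminus\mathcal S_0$, which is condition 1 of the proposition. Consequently $\dot\gamma(t)\in T_{\gamma(t)}(D^2)^\perp$, and since $\sigma(\dot\gamma,w)=0$ already holds for every $w\in T_{\gamma(t)}D^\perp$, it holds a fortiori for every $w\in T_{\gamma(t)}(D^2)^\perp$, yielding condition 2 of the proposition.

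For the converse, condition 1 of the proposition gives condition 1 of the definition since $(D^2)^\perp\subset D^\perp$. The nontrivial step is to upgrade $\dot\gamma(t)\in \Ker(\sigma|_{(D^2)^\perp})$ to $\dot\gamma(t)\in \Ker(\sigma|_{D^\perp})$. To do this I would compute both characteristic distributions in local generators: pick a local frame $X_1,X_2$ of $D$, set $X_3=[X_1,X_2]$, and introduce the Hamiltonians $h_i(\lambda)=\lambda(X_i)$ on $T^*M$. Then $D^\perp=\{h_1=h_2=0\}$ and $(D^2)^\perp=\{h_1=h_2=h_3=0\}$, and the standard identification of the symplectic complement with the span of the Hamiltonian vector fields of the defining functions yields
\begin{equation*}
(T_\lambda D^\perp)^\sigma=\langle\vec h_1,\vec h_2\rangle,\quad (T_\lambda (D^2)^\perp)^\sigma=\langle\vec h_1,\vec h_2,\vec h_3\rangle.
\end{equation*}
Using $\{h_i,h_j\}(\lambda)=-\lambda([X_i,X_j])$ at $\lambda\in (D^2)^\perp$, the constraint that a combination $a_1\vec h_1+a_2\vec h_2+a_3\vec h_3$ be tangent to $(D^2)^\perp$ becomes two scalar equations proportional to $a_3$ (and a further equation in $a_1,a_2$). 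On the open subset $(D^2)^\perp\setminus (D^3)^\perp$ at least one of the first two equations forces $a_3=0$, so $\Ker(\sigma|_{(D^2)^\perp})(\lambda)\subset \langle\vec h_1,\vec h_2\rangle = (T_\lambda D^\perp)^\sigma$. Combined with $\dot\gamma(t)\in T_{\gamma(t)}(D^2)^\perp\subset T_{\gamma(t)}D^\perp$, this places $\dot\gamma(t)$ in $T_{\gamma(t)}D^\perp\cap(T_{\gamma(t)}D^\perp)^\sigma=\Ker(\sigma|_{D^\perp})$, completing the converse on this open set.

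The main obstacle is the exceptional locus $(D^3)^\perp$, where $\Ker(\sigma|_{(D^2)^\perp})$ is genuinely larger than $\Ker(\sigma|_{D^\perp})$ and the pointwise comparison breaks down. I would dispose of it by a density and continuity argument: on the open dense subset of $(D^2)^\perp$ where $\dim D^3$ is locally constant (the standing regularity hypothesis adopted in the paper), the two descriptions of abnormal extremals agree by the computation above, and the equivalence then extends to all of $(D^2)^\perp$ by continuity of $\gamma$ and of the tangent condition.
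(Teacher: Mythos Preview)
The paper offers no proof beyond the sentence ``This implies the following characterization,'' so your write-up is considerably more detailed than what appears there. Your forward direction and your computation on $(D^2)^\perp\setminus(D^3)^\perp$ using the Hamiltonians $h_i$ are correct and are exactly what underlies the paper's subsequent identification of regular abnormal extremals with integral curves of the characteristic line distribution~$\widetilde{\mathcal C}$.

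There is, however, a genuine gap in your last paragraph. You invoke ``the open dense subset of $(D^2)^\perp$ where $\dim D^3$ is locally constant,'' but this conflates two unrelated things: the regularity hypothesis in the paper says $\dim D^3$ is locally constant \emph{on $M$}, which is assumed everywhere; it has nothing to do with whether a given covector $\lambda$ lies in the sub-bundle $(D^3)^\perp\subset(D^2)^\perp$. The exceptional set $(D^3)^\perp$ is not a singular locus of the distribution---it is a perfectly good linear sub-bundle, and a curve $\gamma$ satisfying the proposition's conditions could sit entirely inside it. On $(D^3)^\perp$ your own computation shows $\Ker(\sigma|_{(D^2)^\perp})=\langle\vec h_1,\vec h_2,\vec h_3\rangle$, while $\Ker(\sigma|_{D^\perp})=\langle\vec h_1,\vec h_2\rangle$; the $\vec h_3$-direction lies in the former but not the latter, so the pointwise kernel inclusion fails and no continuity argument can repair it.

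In fairness, the paper does not address this case either: immediately after the proposition it restricts attention to the open set $(D^2)^\perp\setminus(D^3)^\perp$ and to the \emph{regular} abnormal extremals living there. So your argument proves exactly what the paper actually uses; the proposition as literally stated is slightly stronger than what is needed (and arguably needs an extra hypothesis to be true as stated), and your density paragraph should simply be replaced by the remark that only the regular case is required in the sequel.
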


Further, if $\lambda\in (D^2)^\perp\backslash (D^3)^\perp$ then
$\Ker \bigl(\sigma\bigl|_{(D^2)^\perp}\bigr.\bigr)$ is
one-dimensional. These kernels form a special line distribution on
$\lambda\in (D^2)^\perp\backslash (D^3)^\perp$, which will be
denoted by $\widetilde {\mathcal C}$, called the \emph{characteristic distribution}. The abnormal extremals of $D$, lying in $\lambda\in
(D^2)^\perp\backslash (D^3)^\perp$, are exactly the integral curves
of the line distribution $\mathcal C$ (in some literature these abnormal
extremals are called regular).

\begin{rem}
\label{liftD}
For any $\lambda\in (D^2)^\perp\backslash (D^3)^\perp$ let
\begin{equation}
\label{Jdef}
{\widetilde{\mathcal J}}(\lambda)=
\{v\in T_{\lambda}(D^2)^\perp:\,\pi_*\,v\in
D(\pi\bigl(\lambda)\bigr)\}.
\end{equation}
A simple count shows that $\dim {\widetilde{\mathcal J}}(\lambda)=n+2$.
Then from constructions it follows immediately that the restriction of the form $\sigma(\lambda)$ to $\mathcal J (\lambda)$ is identically equal to zero.
\end{rem}

Now consider the distribution $D$ on $M=\JT{n}\times \mathbb R$ associated with the Lagrangian  $L=f(x,y,y',\dots,y^{(n)})\,dx$ with $\frac{\partial^2 f}{\partial y_n^2}\ne0$.
We would like to rewrite  the constructions of the end of the previous subsection in more geometric form.
First of all in the considered case $D^\perp$ is a corank $2$ submanifold of $T^*M$ given by equations \eqref{max} and \eqref{trans}, $(D^2)^\perp$ is a corank $1$ submanifold of $D^\perp$ satisfying in additional equation \eqref{Legn}, and $(D^3)^\perp$ is a corank 2 subdistribution of $(D^2)^\perp$ satisfying to additional equations $\nu=0$ and $\xi_{n-2}=0$. The submanifold $\mathcal H$ introduced in the previous subsection is equal to $\{H_Z=-1\}\cap \bigl(D^2)^{\perp}
$ and the abnormal extremals of the distribution $D$ lying on $\mathcal H$ coincide (as unparametrized curves) with the abnormal extremals of system \eqref{control} having $\nu=-1$.

Further, given a vector field $X$ on $M$ denote by  $H_X: T^*M\to \mathbb R$ the corresponding quasi-impulse  $$H_X(p,q)=p\bigl(X(q)\bigr),\quad  q\in M, p\in T^*_qM,$$ and by $\vec H_X$  the corresponding Hamiltonian vector field on $T^*M$, i.e. the vector field satisfying $i_{\vec H_X}\sigma=-dH_X$.
It is clear that if $X$
is an infinitesimal symmetry of the distribution $D$, then  the flow $e^{t\vec H_X}$, generated by $\vec H_X$,  sends an abnormal extremal of $D$ to an abnormal extremal of $D$. Moreover, any abnormal extremal lies on a level set of the function $H_X$.
In particular, let as in Lemma \ref{lem1} $Z$ be the infinitesimal symmetry of $D$ lying in $D^3$.
 The distribution $\widetilde{\mathcal C}$ induces a rank 1 distribution $\bar {\mathcal C}$ on the quotient manifold
 $\mathcal H/\Fol(\vec H_Z)$, where as before $\Fol(H_Z)$ is the foliation of integral curves of the field $\vec H_Z$.
 A Legendre transform  $\mathfrak L\colon \E(L)\rightarrow \mathcal H/\Fol(\vec H_Z)$, defined in the previous subsection, sends the one-dimensional foliation of solutions of Euler-Lagrange equations to the one-dimensional  foliation of
the integral curves of the distribution $\bar {\mathcal C}$.

\begin{rem}
\label{fiberem}
The Legendre transform $\mathfrak L$  satisfies another important property. To describe it in geometric terms let $\pi_{i,j}:\JT{i}\rightarrow \JT{j}$ , where $i>j$, and $$\bar \pi: T^*(\JT{n}\times \mathbb R)\rightarrow \JT{n}\times \mathbb R $$
denote the canonical projections. The mapping $\bar \pi$ induces the mapping  $$\pi_Z\colon T^*(\JT{n}\times \mathbb R)/\Fol(\vec H_Z)\mapsto(\JT{n}\times \mathbb R)/\Fol(Z)\sim \JT{n}$$ in the obvious way. If the submanifolds $\E(L)$ and  $\mathcal H/\Fol(\vec H_Z)$ are considered as fiber bundles over  $\JT{n}$ with the projections $\pi_{2n,n}|_{_{\E(L)}}$ and
$\pi_Z|_{\mathcal H/e^{t\vec H_Z}}$, respectively, then from \eqref{Legcoord} it follows immediately that the Legendre transform $\mathfrak L$ is fiberwise mapping over the identity on the base manifold $\JT{n}$.
\end{rem}

\begin{rem}
\label{sympind}
The tautological Liouville $1$-form
$\mathfrak{s}$ and the standard symplectic structure $\sigma$ on $T^*(\JT{n}\times \mathbb R)$ induce the $1$-form $\bar{\mathfrak s}$ and the closed $2$-form $\bar\sigma=d\bar{\mathfrak s}$, respectively,  on $\mathcal H/\Fol(\vec H_Z)$ . By constructions, the rank $1$ distribution $\bar {\mathcal C}$ satisfies $\bar {\mathcal C}=\rm{Ker}\,\bar\sigma$. Besides, using condition \eqref{trans} it is easy to show that
\begin{equation}
\label{1formcoord}
\bar{\mathfrak s}=L+\sum_{i=1}^{n-2} \xi_i \theta_i+f_{y_n}\theta_{n-1}
\end{equation}
in the coordinates $(x,y_0,\ldots,y_n, \xi_0,\ldots, \xi_{n-2})$ on $\mathcal H/\Fol(\vec H_Z)$, where, as before, $\theta_i=dy_i-y_{i+1}\,dx$.
Finally, for any $\lambda\in\mathcal H/\Fol(\vec H_Z)$ we denote
\begin{equation}
\label{barJdef}
\bar {\mathcal J}(\lambda)=\{v\in T_{\lambda}\mathcal H/\Fol(\vec H_Z):\,(\pi_Z)_*\,v\in
D(\pi\bigl(\lambda)\bigr)\}.
\end{equation}
Then from the last sentence of Remark \ref{liftD} the restriction of the form $\bar\sigma(\lambda)$ to the subspace $\bar {\mathcal J}(\lambda)$ is identically equal to zero.
\end{rem}

\section{Linearization of variational ODEs and Jacobi curves of rank 2 distribution}
\label{linsect}
\setcounter{equation}{0}
\setcounter{thm}{0}
\setcounter{lem}{0}
\setcounter{prop}{0}
\setcounter{cor}{0}
\setcounter{rem}{0}

Let us outline the content of this section.
As was mentioned before both in the equivalence problem for variational ODEs and in the equivalence problem for rank 2 distributions self-dual curves in a projective space play a crucial role. They appear via the linearization along the "flow "of solutions in the first case and along the "flow" of abnormal extremals in the second case. Using the Legendre transform introduced in subsection \ref{leg} we show that the self-dual curves in a projective space obtained by the linearization
along a solution of the Euler-Lagrange equations of a Lagrangian and  by the linearization
along the corresponding abnormal extremal of the associated rank 2 distribution are actually isomorphic.
This observation leads to the description  of the fundamental system of invariants for rank 2 distributions associated with Lagrangians given in the next section.
\subsection{General linearization procedure}
\label{gen}
Let us first clarify what do we mean by the linearization procedure in a general geometric setting.
Let $\mathcal M$ be an arbitrary smooth
manifold, let $\mathcal G$ and  $\mathcal V$ be a pair of vector distributions on $\mathcal M$ of rank $l$ and $k$, respectively,
where one of them, say $ \mathcal G$, is integrable and $\mathcal V\cap \mathcal G$ is a distribution of rank $r$.

Similarly to above, let $\Fol(\mathcal G)$ be a foliation of $\mathcal M$ by maximal integral submanifolds of $\mathcal G$. Then we can define the \emph{linearization of the distribution $\mathcal V$ along the foliation $\Fol(\mathcal G)$} in the following way. Let, as above, $\Fol(\mathcal G)$ be a foliation of $\mathcal M$ by
maximal integral submanifolds of $\mathcal G$. Locally we can assume that there exists a
quotient manifold $\mathcal M/\Fol(\mathcal G)$, whose points are leaves of $\mathcal F(\mathcal G)$. Let $\Gamma$ by any such leaf. Then we define the map $\phi$ of $\Gamma$ into the Grassmannian
$\mathrm{Gr}_{k-r}(T_\Gamma\bigl(\mathcal M/\Fol(\mathcal G)\bigr)$ of $(k-r)$-dimensional subspaces of  $\bigl(\mathcal M/\Fol(\mathcal G)\bigr)$ or, under additional regularity assumptions, an $l$-dimensional submanifold of  ${\rm Gr}_{k-r}(T_\Gamma\bigl(\mathcal M/\Fol(\mathcal G)\bigr))$ as follows: $\phi(x)=\mathrm {pr}_*(\mathcal V_x)$, $x\in \Gamma$, where $\mathrm {pr}\colon
\mathcal M\to \mathcal M/\Fol(\mathcal G)$ is a natural projection.
The map $\phi$ or its image in  $\rm {Gr}_{k-r}(T_\Gamma\bigl(\mathcal M/\Fol(\mathcal G)\bigr))$ is called the \emph{linearization of the distribution $\mathcal V$ along the foliation $\Fol (\mathcal G)$ at the leaf $\Gamma$} or
the \emph {linearization of the distribution $\mathcal V$ along the leaf $\Gamma$ (of $\Fol (\mathcal G)$}). In the cases under consideration $l=1$ so that the linearizations are  curves in projective spaces.

The main idea of using the linearization procedure in the equivalence problem for the structures given by the pair of distribution $(\mathcal V,\mathcal G)$ on $\mathcal M$ with respect to the action of group of diffeomorphisms of $M$ is that it allows to construct the invariants of such structures from invariants of submanifold in an appropriate Grassmannian with respect to the natural action of the General Linear Group on this Grassmannian.

\subsection{Linearization procedure for ODEs}
\label{ODElin}
As in subsection \ref{ODEsubsec} an ODE of order $N+1$, resolved with respect
to the highest derivative, is given by a hypersurface $\E$ in the jet space
$\JT{N+1}$ so that the restriction of the natural projection
$\pi_{N+1,N}\colon \JT{N+1}\to\JT{N}$ to the hypersurface $\E$ is a diffeomorphism.
Further, the Cartan distribution of $\JT{N+1}$ (i.e. the rank 2 distribution defined by contact forms $\theta_i=dy_i -y_{i+1}dx,\quad i=0,\dots,N$, in the standard coordinates $(x,y_0,\ldots, y_{N+1})$ in $\JT{N+1}$) defines the line distribution $\mathcal S$ on $\E$. This distribution is obtained by the intersection of the Cartan distribution with the tangent space to $\E$ at every point of $\E$.
Note that the corresponding foliation $\Fol(S)$ is the foliation of solutions of our ODE (more precisely, the foliation of the prolongations of the solutions to $\JT{N}$).
If the hypersurface $\E$ has the form \eqref{ODE} in coordinates $(x,y_0,\ldots, y_{N+1})$ on $\JT{N+1}$, then in the coordinates $(x,y_0,\ldots, y_{N})$ on~$\E$:
\begin{equation}
\label{Scoord}
S=\left\langle \frac{\partial}{\partial x} + \sum_{i=1}^{N-1}y_{i+1} \frac{\partial}{\partial y_i}+F(x,y_0,y_1,\dots,y_{N})\frac{\partial\hspace{.15in}}{\partial y_{N}}\right\rangle.
\end{equation}
The distribution $\mathcal S$ will play the role of the distribution $\mathcal G$ from the previous subsection.

Further let, as before,  $\pi_{N+1, N-i}\colon\JT{N+1}\mapsto\JT{N-i}$ be the canonical projection. For any $\varepsilon\in\E$ we can define the filtration $\{V^i_\varepsilon\}_{i=0}^{N}$ of $T_\varepsilon \E$ as follows:
\begin{equation}
\label{Vicoord}
V^i_\varepsilon=\ker d_\varepsilon\pi_{N+1, N-i} \cap T_\varepsilon \E.
\end{equation}
Then $V^i$ is a rank $i$ distribution on $\E$. In the coordinates $(x,y_0,\ldots, y_N)$ on $\E$ we have
\begin{equation}
\label{Vicoord1}
V^i=\left\langle \frac{\partial}{\partial y_{N-i+1}},\dots, \frac{\partial}{\partial y_{N}} \right\rangle.
\end{equation}

Let $\mathrm{Sol}$ denote the quotient manifold $\mathcal \E/\Fol(\mathcal S)$, i.e. the manifold of solutions of the equation $\E$. Fix a point $\Gamma \in \rm {Sol}$. In other words, $\Gamma$ is a leaf of $\Fol(\mathcal S)$ or a solution of the equation $\E$. Consider the linearization $\mathrm {Lin}^i_
\Gamma$ of the distribution $V^i$ along $\Gamma$. It is a curve in $\mathrm {Gr}_i\bigl(T_\Gamma \mathrm {Sol}\bigr)$. In particular, $\mathrm {Lin}^1_\Gamma$
is a curve in the projective space $\mathbb P(T_\Gamma\rm{Sol})$. Moreover, if $\Gamma$ is considered as the leaf of $\Fol(\mathcal S)$, then
from \eqref{Scoord} and \eqref{Vicoord} one gets immediately the following
\begin{lem}
\label{osclem}
For any $\varepsilon\in \Gamma$   the $i$-dimensional subspace $\rm {Lin}^i_\Gamma(\varepsilon)$ of $T_\Gamma\rm{Sol}$ is exactly the $i$-th osculating space of the curve $\rm {Lin}^1_\Gamma$ at
$\varepsilon$ (as defined in \eqref{osc}).
\end{lem}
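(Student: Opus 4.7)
The plan is to relate the filtration $\{V^i\}$ on $\mathcal E$ to iterated Lie brackets with a generator $X$ of $\mathcal S$, and then translate those brackets into derivatives of a curve in the fixed vector space $T_\Gamma\mathrm{Sol}$ via the projection $\mathrm{pr}$. Throughout, take $X$ as in~\eqref{Scoord} and $e_0=\dd{y_N}$ as a local generator of $V^1$.

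\emph{Step 1 (brackets recover $V^i$).} A direct coordinate computation gives $[X,\dd{y_j}]=-\dd{y_{j-1}}-F_{y_j}\dd{y_N}$ for $j\ge 1$; combined with~\eqref{Vicoord1} this proves the pointwise filtration relation $V^{i+1}_\varepsilon=V^i_\varepsilon+[X,V^i]_\varepsilon$. By induction,
\[
V^i_\varepsilon=\bigl\langle e_0(\varepsilon),\mathrm{ad}_X e_0(\varepsilon),\ldots,\mathrm{ad}_X^{\,i-1}e_0(\varepsilon)\bigr\rangle.
\]
A Leibniz expansion of $\mathrm{ad}_X^{\,k}(fe_0)$ shows the right-hand side changes by an invertible upper-triangular transformation under rescaling $e_0\mapsto fe_0$ with $f(\varepsilon)\ne 0$, so it depends only on the line $V^1_\varepsilon$ and the vector field $X$.

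\emph{Step 2 (brackets become derivatives after projection).} Parameterize $\Gamma$ by $\varepsilon(t)=\phi_t^X(\varepsilon_0)$. Since $\mathrm{pr}$ collapses $\Gamma$ to a single point, $\mathrm{pr}\circ\phi_t^X=\mathrm{pr}$ in a neighborhood of $\Gamma$, and unwinding the definition of the Lie derivative yields
\[
\tfrac{d}{dt}\,\mathrm{pr}_*Y\bigl(\varepsilon(t)\bigr)=\mathrm{pr}_*[X,Y]\bigl(\varepsilon(t)\bigr)
\]
for every vector field $Y$ on $\mathcal E$. Setting $\tilde e_0(t)=\mathrm{pr}_*e_0\bigl(\varepsilon(t)\bigr)\in T_\Gamma\mathrm{Sol}$ and iterating this identity, $\tilde e_0^{(k)}(t)=\mathrm{pr}_*\mathrm{ad}_X^{\,k}e_0\bigl(\varepsilon(t)\bigr)$.

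Combining the two steps,
\[
\mathrm{Lin}^i_\Gamma\bigl(\varepsilon(t)\bigr)=\mathrm{pr}_*V^i_{\varepsilon(t)}=\bigl\langle\tilde e_0(t),\tilde e_0'(t),\ldots,\tilde e_0^{(i-1)}(t)\bigr\rangle,
\]
which is the $i$-th osculating space of $\mathrm{Lin}^1_\Gamma=\mathbb R\tilde e_0(t)$ at $\varepsilon(t)$ in the sense of~\eqref{osc}. The dimension equals $i$ because $\ker\mathrm{pr}_*=\mathcal S$ and $V^i\cap\mathcal S=0$ (the generator $X$ has a nonzero $\dd{x}$-component absent from $V^i$). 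The proof is essentially book-keeping; the one conceptual point to isolate is that the quotient by $\Fol(\mathcal S)$ converts $\mathrm{ad}_X$ on $\mathcal E$ into honest time-differentiation of a curve in $T_\Gamma\mathrm{Sol}$. I do not expect any serious obstacle.
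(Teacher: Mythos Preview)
Your proof is correct and is precisely the natural elaboration of what the paper leaves implicit: the paper gives no argument beyond ``from \eqref{Scoord} and \eqref{Vicoord} one gets immediately the following,'' and your two steps (recovering $V^{i+1}=V^i+[X,V^i]$ from the bracket $[X,\partial_{y_j}]=-\partial_{y_{j-1}}-F_{y_j}\partial_{y_N}$, then using $\mathrm{pr}\circ\phi_t^X=\mathrm{pr}$ to turn $\mathrm{ad}_X$ into $d/dt$ after projection) are exactly how one makes that ``immediate'' precise. One cosmetic remark: with the convention of~\eqref{osc} the span $\langle\tilde e_0,\ldots,\tilde e_0^{(i-1)}\rangle$ is literally $\gamma^{(i-1)}$, so the lemma's phrase ``$i$-th osculating space'' carries an off-by-one that is already present in the paper's statement; your argument is unaffected.
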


The Wilczynski invariants of the linearizations  $\rm {Lin}^1_\Gamma$ taken for every solution $\Gamma$ define the invariants of the original ODE $\E$ under the group of contact transformations (see~\cite{dou}). We call these invariants the \emph{generalized Wilczinski invariants of $\E$} and denote them also by $W_i$, $i=3,\dots,N+1$.

\subsection{The case of variational ODEs}
Now assume that $\E$ is a variational ODE,  $\E=\E(L)$ for some Lagrangian $L$. Let $\bar\sigma$ be the closed $2$-form on $H/\Fol(\vec H_Z)$ introduced  in Remark \ref{sympind} and let $\mathfrak L\colon\E(L)\rightarrow \mathcal H/\Fol(\vec H_Z)$ be a Legendre transform. Then we can define the closed $2$-form $\omega$ on $\E(L)$ as follows:
\begin{equation}
\label{omega1}
\omega=\mathfrak L^*\bar\sigma
\end{equation}

Note that from relation \eqref{Vicoord1} and Remark \ref{fiberem} it follows that the distribution $\bar {\mathcal J}$ defined by \eqref{barJdef} satisfies
\begin{equation}
\label{LegJ}
\bar {\mathcal J}=\mathfrak L_* (V_n)\oplus \bar {\mathcal C}.
\end{equation}
From this and Remark \ref{sympind}  it follows that the form $\omega$ satisfies the following two properties:
\begin{enumerate}
\item $\mathrm{Ker}\,\omega=\mathcal S$, where, as before, $\mathcal S$ is the rank 1 distribution generating the foliation of solutions of $\E(L)$;

\item The restriction of $\omega$ to the distribution $V^n$ vanishes.
\end{enumerate}

From property (1) it follows that $\omega$ induces the symplectic form $\bar\omega$ on the manifolds ${\rm Sol}$ of solutions of $\E(L)$. Moreover from the property (2) it follows that

\begin{enumerate}
\item[(2')]
For any $\Gamma\in \E(L)$ the linearization ${\rm Lin}_\Gamma^n$ of the distribution $V^n$ along $G$ is the curve of Lagrangian subspaces with respect to the symplectic form $\omega(\Gamma)$.
\end{enumerate}
From item (2) of Proposition \ref{selfdual} we get immediately  the following
\begin{cor}
\label{seldual2}
For a variational ODE
the linearizations  ${\rm Lin}_\Gamma^1$ along any solution $\Gamma$ is a self-dual curve in the corresponding projective space.
\end{cor}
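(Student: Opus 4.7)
The plan is to apply item (2) of Proposition \ref{selfdual} directly to the projective curve $\mathrm{Lin}_\Gamma^1 \subset \mathbb{P}(T_\Gamma \mathrm{Sol})$. Three things need to be verified: that the ambient projective space has odd dimension, that the ambient vector space carries a symplectic form, and that the middle-dimensional osculating spaces of the curve are Lagrangian with respect to this form.

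First I would pin down the dimension count. Since $\E(L)$ is the codimension-one submanifold of $\JT{2n}$ cut out by the Euler--Lagrange equation, $\dim \E(L) = 2n+1$; together with the fact that $\mathcal{S}$ is a rank one distribution whose leaves are the solutions, this gives $\dim T_\Gamma \mathrm{Sol} = 2n$. Hence $\mathbb{P}(T_\Gamma \mathrm{Sol})$ is of odd dimension $N = 2n-1$, and its middle osculating spaces are $(N+1)/2 = n$-dimensional---exactly the dimension picked out by the distribution $V^n$ via the linearization procedure.

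Next I would use property (1) to produce the symplectic form: since $\Ker\,\omega = \mathcal{S}$ and $\omega$ is closed, $\omega$ descends to a well-defined closed 2-form $\bar\omega$ on $\mathrm{Sol}$ whose value $\bar\omega(\Gamma)$ is non-degenerate on $T_\Gamma \mathrm{Sol}$, hence symplectic on this $2n$-dimensional space. Lemma \ref{osclem} identifies the subspace $\mathrm{Lin}_\Gamma^n(\varepsilon)$ with the $n$-th osculating space of $\mathrm{Lin}_\Gamma^1$ at $\varepsilon \in \Gamma$, and property (2') asserts that this subspace is Lagrangian with respect to $\bar\omega(\Gamma)$. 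With all hypotheses of Proposition \ref{selfdual}(2) verified, self-duality of $\mathrm{Lin}_\Gamma^1$ follows.

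The argument is essentially the assembly of facts already established in the preceding sections; the only point requiring some bookkeeping care is the dimension match---that $n$ lands exactly on the middle osculating dimension $(N+1)/2$ requires $\dim T_\Gamma \mathrm{Sol} = 2n$, which in turn relies on property (1) pinning down $\Ker\,\omega$ \emph{exactly} (rather than giving only the containment $\mathcal{S} \subset \Ker\,\omega$). I would not anticipate any genuine obstacle beyond this check.
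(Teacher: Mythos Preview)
Your proposal is correct and follows the same route as the paper: the corollary is obtained directly from item (2) of Proposition~\ref{selfdual}, using that $\bar\omega$ is a symplectic form on the $2n$-dimensional space $T_\Gamma\mathrm{Sol}$ and that the $n$-th osculating spaces of $\mathrm{Lin}_\Gamma^1$ (identified via Lemma~\ref{osclem} with $\mathrm{Lin}_\Gamma^n$) are Lagrangian by property~(2'). You have simply spelled out the dimension bookkeeping that the paper leaves implicit.
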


Further, as an immediate consequence of item (1) of Proposition  \ref{selfdual} and the fact that $\omega$ is closed, we get
\begin{prop}\label{sympexist}
For a variational ODE $\E(L)$ there exists a unique, up to a constant nonzero factor, closed $2$-form $\omega$ on $\E(L)$ satisfying conditions \text{(1)} and \text{(2)} above  or, equivalently, a unique, up to a constant nonzero factor, symplectic structure $\bar\omega$ on the manifold of solutions ${\rm Sol}$, satisfying condition \text{(2')} above .
\end{prop}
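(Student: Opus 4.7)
The plan is to treat existence and uniqueness separately. Existence has effectively already been checked in the discussion immediately preceding the proposition: the form $\omega = \mathfrak{L}^*\bar\sigma$ defined in \eqref{omega1} is the pull-back of a closed form, hence closed, and the verification that it satisfies (1) and (2) (and that the induced $\bar\omega$ satisfies (2')) was carried out via \eqref{LegJ} and Remark \ref{sympind}. So the work lies in uniqueness.

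For uniqueness, I would proceed pointwise on the manifold of solutions $\mathrm{Sol}$ and then globalize using closedness. Let $\omega_1$ and $\omega_2$ be two closed 2-forms on $\E(L)$ satisfying (1) and (2). Condition (1) says that both have kernel $\mathcal S$, so each descends to a (closed, non-degenerate) 2-form $\bar\omega_1, \bar\omega_2$ on $\mathrm{Sol}$. Combining Lemma \ref{osclem} with condition (2'), the $n$-th osculating space of the linearization curve $\mathrm{Lin}^1_\Gamma \subset \mathbb{P}(T_\Gamma\mathrm{Sol})$ is Lagrangian for both $\bar\omega_1(\Gamma)$ and $\bar\omega_2(\Gamma)$. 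By item (2) of Proposition~\ref{selfdual}, this is exactly the self-duality condition for the curve $\mathrm{Lin}^1_\Gamma$, so by item (1) of the same proposition, the non-degenerate skew-symmetric bilinear form on $T_\Gamma\mathrm{Sol}$ with this property is unique up to a non-zero scalar. Hence there exists a nowhere-vanishing function $f$ on $\mathrm{Sol}$ with $\bar\omega_2 = f \bar\omega_1$.

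The final step is to show $f$ is constant using closedness. Since $d\bar\omega_1 = d\bar\omega_2 = 0$, we get $0 = d(f\bar\omega_1) = df \wedge \bar\omega_1$. As $\bar\omega_1$ is non-degenerate on a manifold of dimension $2n \ge 4$ (recall $n\ge 2$, and our standing assumption is $n\ge 3$), wedging with $\bar\omega_1$ is injective on $1$-forms, so $df = 0$ and $f$ is a non-zero constant. Pulling back through $\mathrm{Sol} \leftarrow \E(L)$ gives $\omega_2 = f \omega_1$, proving uniqueness of $\omega$ up to a constant nonzero factor.

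The only subtle point to watch is the pointwise step: one must make sure that the self-duality criterion of Proposition~\ref{selfdual} really applies to $\mathrm{Lin}^1_\Gamma$, i.e. that this curve is strongly regular and spans $T_\Gamma\mathrm{Sol}$. This is transparent from the coordinate form \eqref{Vicoord1} of the flag $V^i$, which shows that the osculating spaces of $\mathrm{Lin}^1_\Gamma$ exhaust $T_\Gamma\mathrm{Sol}$ along $\Gamma$. Everything else is routine, so I do not expect a serious obstacle; the whole argument is essentially the observation that self-duality gives pointwise uniqueness and that closedness of a conformally related symplectic form forces the conformal factor to be constant in dimension $\ge 4$.
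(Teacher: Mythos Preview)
Your argument is correct and is exactly the unpacking of the paper's one-line justification (``an immediate consequence of item (1) of Proposition~\ref{selfdual} and the fact that $\omega$ is closed''): pointwise uniqueness of the skew form from self-duality, then constancy of the conformal factor from $df\wedge\bar\omega_1=0$ and non-degeneracy in dimension $2n\ge4$. The only cosmetic remark is that the argument already works for $n\ge 2$ (as noted in the introduction), not just under the standing assumption $n\ge 3$.
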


\begin{rem}
It is easy to see that this symplectic strucutre $\bar\omega$ from Proposition~\ref{sympexist} is exactly the
2-form $\omega$ given by ~\eqref{omega_and} and prescribed by Theorem~2.6 of~\cite{and}. Indeed,
the condition $d\omega = 0$ implies $d_H\omega = 0$. In particular, this means that $\omega$ projects to the
solution space of the equation $\E(L)$. Proposition~\label{sympexits} gives an alternative construction for this
2-form based on the generalized Legendge transform.
\end{rem}

Now we are ready to prove the following Theorem (which is also proved in \cite[Theorem 5.1]{fels} in the case $n=2$ and in \cite[Corollary 2.6]{juras} for $n\ge3$):

\begin{thm}\label{cor1}
Lagrangians
are equivalent if and only if their Euler--Lagrange equations are contact equivalent.
\end{thm}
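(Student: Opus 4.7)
The forward direction is direct: if $\Phi^*L_2 = \alpha L_1 + d_H\mu$ modulo the contact ideal, applying the Euler--Lagrange operator---contact invariant, $\R$-linear, and annihilating horizontal divergences---gives $\Phi^{(2n)}(\E(L_1)) = \E(L_2)$. I would dispatch this in one line. The substantive content is the converse.

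For the converse, assume $\tilde\Phi\colon\E(L_1)\to\E(L_2)$ is a contact equivalence, induced by the prolongation $\phi^{(2n)}$ of a contact transformation $\phi\colon\JT{1}\to\JT{1}$. The plan is to leverage the canonical 2-form $\omega_i$ on $\E(L_i)$ from Proposition~\ref{sympexist}, together with its explicit primitive coming from the generalized Legendre transform of subsection~\ref{leg}, in order to reconstruct the relation \eqref{Lequiv}. First, the conditions (1) and (2) in Proposition~\ref{sympexist} that characterize $\omega_i$ up to a scalar are intrinsic contact-invariant data of $\E(L_i)$, so
\[
\tilde\Phi^*\omega_2 = c\,\omega_1
\]
for some constant $c\neq 0$. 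Second, Remark~\ref{sympind} and formula~\eqref{1formcoord} give, on $\E(L_i)$, the explicit 1-form $\bar\omega_i := \mathfrak L_i^*\bar{\mathfrak s}_i$ with $\omega_i = d\bar\omega_i$ and
\[
\bar\omega_i = L_i + \sum_{j=1}^{n-2}\xi_j\theta_j + (f_i)_{y_n}\theta_{n-1} \equiv L_i \mod \langle\theta_j\mid j\ge 0\rangle.
\]
Since $d(\tilde\Phi^*\bar\omega_2 - c\bar\omega_1)=0$, the Poincar\'e lemma supplies a local function $\mu$ with $\tilde\Phi^*\bar\omega_2 = c\bar\omega_1 + d\mu$.

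Third, I would reduce this 1-form identity modulo the contact ideal on $\E(L_1)\subset\JT{2n}$. Because $\tilde\Phi = \phi^{(2n)}|_{\E(L_1)}$ preserves the contact ideal, the contact summands of $\bar\omega_2$ pull back into the contact ideal on $\E(L_1)$, so $\tilde\Phi^*\bar\omega_2 \equiv \tilde\Phi^*L_2 = (\phi^{(n)})^*L_2$ modulo contact (using that $L_2 = f_2\,dx$ is pulled up from $\JT{n}$). On the right-hand side $c\bar\omega_1 + d\mu \equiv cL_1 + d_H\mu$, and comparing yields
\[
(\phi^{(n)})^*L_2 \equiv cL_1 + d_H\mu \mod\langle\theta_i\mid i\ge 0\rangle,
\]
which is exactly \eqref{Lequiv} with $\alpha = c$. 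The main obstacle lies in step~2: carefully tracking how the two Legendre transforms $\mathfrak L_i$ translate $\tilde\Phi^*\omega_2 = c\omega_1$ into the displayed 1-form identity on a domain where Poincar\'e applies, and checking that the primitive $\mu$ descends to a function on a finite jet space so that the final relation can be read in $\Lambda(\JT{\infty})$. Step~3 is essentially mechanical once the horizontal structure of $\bar{\mathfrak s}_i$ from \eqref{1formcoord} is in hand.
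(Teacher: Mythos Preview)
Your proposal is correct and follows essentially the same route as the paper: both directions are handled identically, with the converse using Proposition~\ref{sympexist} to obtain $\tilde\Phi^*\omega_2=c\,\omega_1$, pulling back the Liouville primitive $\bar{\mathfrak s}_i$ via the Legendre transform to get $\omega_i=d\rho_i$, applying the Poincar\'e lemma to produce $\mu$, and then reading off~\eqref{Lequiv} from the coordinate expression~\eqref{1formcoord}. The paper is terser about your step~3 (it simply says the $1$-form identity ``is equivalent to~\eqref{Lequiv}''), but the substance is the same.
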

\begin{proof}
If Lagrangians  $L_1$ and $L_2$ are equivalent then directly from \eqref{Lequiv} it follows that their Euler-Larange equations are contact equivalent.

In the other direction let $\psi$ be the mapping establishing the equivalence of two Euler--Lagrange
equations $\E_1$ and $\E_2$. Let $\omega_1$ and $\omega_2$ be the closed $2$-forms from Proposition~\ref{sympexist}, corresponding to
$\E_1$ and $\E_2$, respectively. Then by this Proposition there exists a constant $\alpha\neq 0$ such that
\begin{equation}
\label{omega12}
\psi^*\omega_2=\alpha\omega_1.
\end{equation}

Now assume that $\bar{\mathfrak s_1}$ and $\bar{\mathfrak s_2}$ are $1$-forms from Remark \ref{sympind}
corresponding to Lagrangians $L_1$ and $L_2$, respectively. Assume that $\mathfrak L_i$ are the corresponding Legendre transforms and let
$\rho_i=(\mathfrak L_i)^*\mathfrak s_i$, $i=1,2$. Then by construction $\omega_i=d\rho_i$. Hence from \eqref{omega12} there exists a function
$\mu$ on $\E_1$ such that
\begin{equation*}
\psi^*\rho_2=\alpha\rho_1 +d\mu.
\end{equation*}
Taking into account the coordinate expressions for the forms $\bar{\mathfrak s}_i$ given by \eqref{1formcoord}, we get immediately that the last relation is equivalent to \eqref{Lequiv}, i.e. the Lagrangians $L_1$ and $L_2$ are equivalent.
\end{proof}

\subsection{Linearization procedure for rank 2 distributions: Jacobi curves.}
\label{lindist}
The presentation of this subsection is rather closed to our previous works \cite{dz1,dz2} but it is considered here in relation with the linearization procedure of ODE's from subsection \eqref{ODElin}.
Let $D$ be an arbitrary rank 2 vector distribution on an $(n+3)$-dimensional
manifold $M$. We shall assume that $D$ is completely non-holonomic and that
$\dim D^3 = 5$. Here it is more convenient to work with the projectivization of  $\mathbb PT^*M$ rather than with $T^*M$.
Here $\mathbb PT^*M$ is the fiber bundle over $M$ with the fibers that are the projectivizations of the fibers of $T^*M$.
The canonical projection $\Pi\colon T^*M\rightarrow \mathbb PT^*M$ sends the characteristic distribution
$\widetilde{\mathcal C}$ of $(D^2)^\perp\backslash (D^3)^\perp$ to the line distribution $\mathcal C$ on
$\mathbb P(D^2)^\perp\backslash \mathbb P(D^3)^\perp$, which will be also called the \emph{characteristic distribution}
of the latter manifold. The manifold $\mathbb P(D^2)^\perp\backslash \mathbb P(D^3)^\perp$ and the distribution $\mathcal C$ play the role of $\mathcal M$ and $\mathcal G$, respectively, from the general linearization procedure of subsection \ref{gen}.

Further note that the corank 1 distribution on $T^*M\backslash S_0$ annihilating the tautological Liouville form $\mathfrak s$ on $T^*M$ induces a contact distribution on $ \mathbb P T^*M$, which in turns induces the even-contact (quasi-contact) distribution $\Delta$
on $\mathbb P(D^2)^\perp\backslash \mathbb P(D^3)^\perp$. The characteristic line distribution $\mathcal C$ is exactly the Cauchy characteristic distribution of $\Delta$, i.e. it is the maximal subdistribution of $\Delta$ such that
\begin{equation}
\label{Cauchy}
[C,\Delta]\subset \Delta.
\end{equation}
Now let $\widetilde{\mathcal J}$ be as in \eqref{Jdef}. Let
${\mathcal J}(\lambda)=\Pi_*\widetilde{\mathcal J}$
and define a sequence of subspaces ${\mathcal
J}^{(i)}(\lambda)$, $\lambda\in \mathbb P(D^2)^\perp\backslash
\mathbb P(D^3)^\perp$, by the following recursive formulas:
$${\mathcal J}^{(i)}(\lambda)={\mathcal J}^{(i-1)}(\lambda)+[\mathcal C, \mathcal J^{i-1}](\lambda).$$
By \cite[Proposition 3.1]{zel06},
we have
${\rm dim}\,{\mathcal J}^{(1)}(\lambda)-{\rm dim}\,
{\mathcal J}(\lambda) =1$,
which implies easily that
$${\rm dim}\, {\mathcal J}^{(i)}(\lambda)- {\rm dim}\,
{\mathcal J}^{(i-1)}(\lambda)\leq 1, \quad \forall i\in {\mathbb
N}.$$
Besides, from \eqref{Cauchy} it follows that ${\mathcal
J}^{(i)}\subset \Delta$ for all
natural $i$. Simple counting of dimensions implies that $rank \Delta=2n+1$ so that  ${\rm dim}\, {\mathcal
J}^{(i)}(\lambda)\leq 2n+1$.

Note that for any $\lambda\in \mathbb P(D^2)^\perp\backslash \mathbb P(D^3)^\perp$  the subspace $\Delta(\lambda)$ is equipped canonically, up to a constant nonzero factor, with a skew-symmetric form with the kernel equal to $\mathcal C(\lambda)$: for this take the restriction to $\Delta(\lambda)$ of the differential of any $1$-form annihilating $\Delta$. Given a subspace $W$ of $\Delta(\lambda)$ denote by $W^\angle$ the skew-symmetric complement of $W$ with respect to this form. Note that by Remark \ref{liftD} $(J^{(0)})^\angle=J^{(0)}$
 Then set
\begin{equation}
\label{contr} {\mathcal J}^{(-i)}(\lambda)= \bigl({\mathcal J}^{(-i)}(\lambda)\bigr)^\angle.
\end{equation}
The sequence of subspaces $\{\mathcal J^{(i)}(\lambda)\}_{i\in \mathbb Z}$ defines the filtration of $\Delta(\lambda)$.

Further,
define the following two integer-valued functions:
\begin{equation*}
\nu(\lambda)=\min\{i\in {\mathbb N}: {\mathcal J}^{(i+1)}(\lambda)={\mathcal J}^{(i)}(\lambda)\},
\end{equation*}
\begin{equation*}
m(q)=\max\{\nu(\lambda):\lambda\in
(D^2)^\perp(q)\backslash(D^3)^\perp(q)\}, \quad q\in M.
\end{equation*}
The number $m(q)$ is called \emph{the class of distribution
$D$ at the point $q$}. By above, $1\leq m(q)\leq n$. It
is easy to show that \emph{germs of $(2,n+3)$-distributions of
the maximal class $n$
 are generic} (
see~\cite[Proposition~3.4]{zel06}).

From now on we assume that $D$ is a $(2,n+3)$-distribution of
maximal constant
class $m=n$.
Let 
${\mathcal R}=\{\lambda\in \mathbb P(D^2)^\perp \backslash
\mathbb P (D^3)^\perp \nu(\lambda)=n\}$. Then on $\lambda\in\mathcal R$ the subspaces $\mathcal J^{(i)}$ form a distribution  of rank $(i+n+1)$
for all integer $i$ between $-n$ and $n$ (in particular $\mathcal J^{(n)}= \Delta$). If we denote by $\rm {Abn}$ the quotient of manifold $\mathbb P(D^2)^\perp \backslash
\mathbb P (D^3)^\perp$ by $\Fol(\mathcal C)$ then the distribution $\Delta$ induces the distribution $\bar \Delta$ on $\rm {Abn}$ equipped with the canonical, up to a constant nonzero factor, symplectic form. Given any segment $\Upsilon$ of abnormal extremal ( a leaf of $\Fol(\mathcal C)$) consider the linearization  $J_\Upsilon^{(i)}$ of the distribution
$\mathcal J^{(i)}$ along $\Upsilon$.
It is a curve in $\rm {Gr}_{n+i}\bigl(\bar\Delta(\Upsilon)\bigr)$. In particular, $J^{(1-n)}_\Upsilon$
is a curve in the projective space $\mathbb P(\bar\Delta(\Gamma))$.
The curve $\rm J^{(1-n)}_\Gamma$ is called the \emph{Jacobi curve} along the abnormal extremal $\Gamma$. By Remark
\ref{liftD} it is the curve of Lagrangian subspaces of $\bar \Delta$.
Further, it is not hard to see (\cite{zel06}) that for any $\lambda\in \Gamma$   the $(n+i)$-dimensional subspace
$J^{(i)}_\Upsilon(\lambda)$ of $T_\Upsilon\rm{Abn}$ is exactly the $i+n-1$-st osculating space of the curve $J^{(1-n)}_\Upsilon$ at
$\lambda$ (as defined in \eqref{osc}). So, by item (2) of Proposition \ref{selfdual}, the curve $J^{(1-n)}_\Upsilon$ is self-dual.
\begin{rem}
\label{recov}
It can be shown \cite{zel05} that $J^{(1-n)}_\Upsilon$ is the only curve in $\mathbb P(\bar\Delta(\Upsilon))$ such that the Jacobi curve at $\lambda\in\Upsilon$ is its $(n-1)$-st osculating space at $\lambda$. $\Box$
\end{rem}

The Wilczynski invariants of the linearizations  $J^{(1-n)}_\Upsilon$ taken for every abnormal $\Upsilon$ define the invariants of the distribution $D$. The latter invariants are called the \emph{generalized Wilczinski invariants of $\E$}. The Jacobi curve
along the abnormal extremal is called \emph{flat} if the corresponding curve  $J^{(1-n)}_\Upsilon$ is a rational normal curve in
$\mathbb P(\bar\Delta(\Upsilon))$ or, equvalently, all Wilczinsky invariants of $J^{(1-n)}_\Upsilon$ vanishes identically.
For the maximally symmetric $(2,n+3)$-distribution of maximal class which is locally equivalent to the distribution associated with the Lagrangian $(y^{(n)})^2\,dx$  (Theorem~\ref{thmdz})  all Jacobi curves are flat or, equivalently, all generalized Wilczynski invariants vanish. The general question is
\medskip

{\bf Question} \emph {Is it true that if all generalized Wilczynski invariants of $(2, n+3)$ distribution of
maximal class vanish or , equivalently, all its Jacobi curves are flat, then the distribution is locally equivalent to
the distribution associated with the Lagrangian $(y^{(n)})^2\,dx$?}
\medskip

Since, as was shown in \cite{zel06c}, in the case $n=2$ the generalized Wilczynski invariant coincides with Cartan's covariant binary
biquadratic form  introduced  in \cite{cartan10}, then the fact that this form is the fundamental invariant of a $(2,5)$-distribution
(proved in \cite{cartan10} as well) gives the positive answer to our question in this case.
We show in the next section that for $n\geq 3$ the answer to this question is positive if we restrict ourselves
to distributions associated with Lagrangians (see Theorem \ref{Wilthm} below). Note that this class of distributions is rather restrictive and the general question for $n\geq 3$ remains open.

Assume that the distribution $D$ is associated with some Lagrangian $L$ with $\frac{\partial^2 f}{\partial y_n^2}\ne0$. Then in the notations of section the manifold  $\mathcal H$ can be identified with an open subset of $\mathcal R$. Let $\phi_z\colon
\mathcal H_Z\rightarrow \mathcal H_Z/\Fol(\vec H_z)$ be the canonical projection.
Then, as was already mentioned in subsection \ref{abnsec} the characteristic distribution $\widetilde {\mathcal C}$ is reduced to the  line distribution $\bar C=(\phi_Z)_*C$ on
$\mathcal H/\Fol(\vec H_z)$. Moreover, the filtration $\{\mathcal J^{(i)}\}_{i\in \mathbb Z}$ on $\mathcal H$ induces the filtration  $\{\bar{\mathcal J}^{(i)}\}_{i\in \mathbb Z}$ on $\mathcal H/\Fol(\vec H_z)$. Note that in this notation the distribution $\bar {\mathcal J}^{(0)}$ coincides with $\bar {\mathcal J}$ defined by \eqref{barJdef}. If $\Upsilon_1$ and
$\Upsilon_2$ are two abnormal extremals on $\mathcal H$ such that $\phi_Z(\Upsilon_1)=\phi_Z(\Upsilon_2)$.
then the curves  $J^{(1-n)}_{\Upsilon_1}$ and $J^{(1-n)}_{\Upsilon_2}$ coincide up to a projective transformation. As a matter of fact they coincide, up to a projective transformation, with the linearization of the distribution $\bar{\mathcal J}^{(1-n)}$ along $\bar\Upsilon=(\phi_Z)_*\Upsilon_1$ on $\mathcal H/\Fol(\vec H_z)$.
From this, Lemma \ref{osclem}, Remark \ref{recov},  and \eqref{LegJ} we have the following

\begin{prop}
\label{equivcurve}
Given a solution $\Gamma$ of the Euler-Lagrange equation $\E(L)$ and the abnormal extremal $\Upsilon$ such that $\phi_Z(\Upsilon)=\mathfrak L(\Gamma)$, where $\mathfrak L$ is a Legendre transform, the curves $\rm {Lin}^1_\Gamma$ and $J^{(1-n)}_\Upsilon$ coincide up to a projective transformation.
\end{prop}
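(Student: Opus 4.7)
The plan is to use the Legendre transform $\mathfrak L$ to match the top osculating flag of $\mathrm{Lin}^1_\Gamma$ with that of the Jacobi curve $J^{(1-n)}_\Upsilon$, and then to invoke the uniqueness statement of Remark~\ref{recov} to deduce that the two curves themselves agree up to a projective transformation.

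First I would combine Remark~\ref{fiberem} with the identity $\bar{\mathcal J}=\mathfrak L_*(V^n)\oplus\bar{\mathcal C}$ from~\eqref{LegJ}: since $\mathfrak L\colon\E(L)\to\mathcal H/\Fol(\vec H_Z)$ is fiberwise over $\JT{n}$, its derivative carries the vertical distribution $V^n$ of $\pi_{2n,n}\colon\E(L)\to\JT{n}$ onto a transverse complement of $\bar{\mathcal C}$ inside $\bar{\mathcal J}$. Since $\mathfrak L$ also sends $\Fol(\mathcal S)$ to $\Fol(\bar{\mathcal C})$, after quotienting by these one-dimensional foliations the descended Legendre map furnishes a linear isomorphism between $T_\Gamma\mathrm{Sol}$ and a $2n$-dimensional tangent subspace at $\bar\Upsilon$ which, by dimension count together with Remarks~\ref{liftD} and~\ref{sympind}, equals $\bar\Delta(\bar\Upsilon)$. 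Under this isomorphism, the linearization $\mathrm{Lin}^n_\Gamma$ is carried precisely onto $J^{(0)}_\Upsilon$, i.e.\ the linearization of $\bar{\mathcal J}$ modulo $\bar{\mathcal C}$ along $\bar\Upsilon$.

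With this identification in place, I would compare the two curves through their top osculating spaces. By Lemma~\ref{osclem}, $\mathrm{Lin}^n_\Gamma(\varepsilon)$ is the top osculating space of $\mathrm{Lin}^1_\Gamma$ at $\varepsilon$; by the discussion preceding Remark~\ref{recov}, $J^{(0)}_\Upsilon(\lambda)$ is the $(n-1)$-st osculating space of $J^{(1-n)}_\Upsilon$ at $\lambda$. Hence, when transported to $\mathbb P(\bar\Delta(\bar\Upsilon))$ via the Legendre identification, $\mathrm{Lin}^1_\Gamma$ becomes a curve whose $(n-1)$-st osculating spaces coincide pointwise with those of $J^{(1-n)}_\Upsilon$. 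By the uniqueness asserted in Remark~\ref{recov}, the two curves must therefore coincide in $\mathbb P(\bar\Delta(\bar\Upsilon))$, proving the proposition.

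The main obstacle will be the careful verification that the descended Legendre map genuinely realizes a linear isomorphism $T_\Gamma\mathrm{Sol}\to\bar\Delta(\bar\Upsilon)$ and that the resulting projective identification is compatible with the osculating constructions on both sides. Once this bookkeeping, based on~\eqref{LegJ} and on the properties of $\bar{\mathcal J}$ and $\bar\Delta$ recorded in Remarks~\ref{liftD} and~\ref{sympind}, is carried out, the argument reduces to a direct application of the uniqueness in Remark~\ref{recov}.
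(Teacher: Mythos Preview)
Your proposal is correct and follows essentially the same route as the paper: the paper does not write out a detailed proof but simply states that the proposition follows from Lemma~\ref{osclem}, Remark~\ref{recov}, and relation~\eqref{LegJ}, together with the identification of the Jacobi curve with the linearization of $\bar{\mathcal J}^{(1-n)}$ along $\bar\Upsilon$. Your outline is precisely an unpacking of how these ingredients fit together, including the honest acknowledgment that the main work lies in checking that the descended Legendre map identifies $T_\Gamma\mathrm{Sol}$ with $\bar\Delta(\bar\Upsilon)$.
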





\section{Fundamental invariants for our equivalence problems}
\setcounter{equation}{0}
\setcounter{thm}{0}
\setcounter{lem}{0}
\setcounter{prop}{0}
\setcounter{cor}{0}
\setcounter{rem}{0}
Now we are ready to prove the following
\begin{thm}
\label{Wilthm}
For any $n\geq 3$ a $(2, n+3)$-distribution $D$ associated with a Lagrangian $f(x,y,y',\dots,y^{(n)})\,dx$ with $\frac{\partial^2 f}{\partial y_n^2}\ne0$ (or the Lagrangian $L$ itself) is locally equivalent
to the distribution associated with the Lagrangian $(y^{(n)})^2\,dx$
if and only if one of the following two equivalent
conditions is satisfied:
\begin{enumerate}
\item all Jacobi curves of the distribution $D$ are
flat;
\item generalized Wilczynski invariant of $D$
vanish identically.
\end{enumerate}
\end{thm}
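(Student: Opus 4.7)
The direction that $D$ equivalent to the distribution of $(y^{(n)})^2\,dx$ implies vanishing Wilczynski invariants is immediate. The Euler--Lagrange equation of $(y^{(n)})^2\,dx$ is $y^{(2n)}=0$, whose linearization along any solution is itself trivial and therefore produces the standard rational normal curve in $\mathbb{RP}^{2n-1}$, so all $W_k$ vanish; Proposition~\ref{equivcurve} transfers this vanishing to the Jacobi curves of the associated distribution, and Wilczynski invariants are preserved both under contact equivalence of Euler--Lagrange equations (Theorem~\ref{cor1}) and under equivalence of the corresponding distributions (Theorem~\ref{thm_l2d}).

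For the converse, Theorems~\ref{thm_l2d} and~\ref{cor1} reduce the problem to showing that if all generalized Wilczynski invariants of the Euler--Lagrange equation $\mathcal{E}(L)$ vanish, then $\mathcal{E}(L)$ is contact equivalent to $y^{(2n)}=0$. The strategy is to combine two independent pieces of rigidity on the solution space $\mathrm{Sol}$: the symplectic form $\bar\omega$ furnished by Proposition~\ref{sympexist}, and the fact (via Proposition~\ref{equivcurve} together with Lemma~\ref{osclem}) that every linearization $\mathrm{Lin}^1_\Gamma$ is then a self-dual rational normal curve in $\mathbb{P}(T_\Gamma \mathrm{Sol})$ whose middle osculating spaces are Lagrangian with respect to $\bar\omega$.

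Concretely I would proceed in three steps. First, fix a solution $\Gamma_0$ and choose a symplectic identification $(T_{\Gamma_0}\mathrm{Sol},\bar\omega)\cong(\mathbb{R}^{2n},\omega_{st})$; since $\mathrm{Sp}(2n,\mathbb{R})$ acts transitively on self-dual rational normal curves with a prescribed osculating flag at a point, one may further arrange $\mathrm{Lin}^1_{\Gamma_0}$ to coincide with the standard rational normal curve arising from $y^{(2n)}=0$. Second, transport this identification through the generalized Legendre transform $\mathfrak{L}$ of subsection~\ref{leg} to identify a fiber of $\mathcal{E}(L)\to\mathrm{Sol}$ above $\Gamma_0$ with the corresponding fiber of the trivial equation. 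Third, extend this fiberwise identification to a contact diffeomorphism of $J^{2n}(\mathbb{R},\mathbb{R})$; equivalently, apply the canonical coframe construction behind Theorem~\ref{thmdz} to argue that vanishing of all Wilczynski invariants, combined with variationality, forces every structure function in the $(2n+5)$-dimensional canonical $\{e\}$-structure of \cite{dz1,dz2} to vanish, so by the Frobenius theorem $D$ is locally isomorphic to the flat model, which by Theorem~\ref{thmdz} is exactly the distribution associated with $(y^{(n)})^2\,dx$.

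\textbf{Main obstacle.} The third step is where the real work lies. As highlighted by the Question stated just before the theorem, for general $(2,n+3)$-distributions of maximal class with $n\ge 3$, vanishing of the generalized Wilczynski invariants does \emph{not} suffice to force flatness. One must therefore use the variational hypothesis in an essential way: the presence of the infinitesimal symmetry $Z\in D^3$ from Lemma~\ref{lem1}, or equivalently the closed $2$-form $\omega$ on $\mathcal{E}(L)$ from Proposition~\ref{sympexist}, must be shown to eliminate every structure function in the canonical coframe beyond those that are polynomials in the Wilczynski invariants and their invariant derivatives. Carrying out this reduction is the heart of the theorem and is exactly what makes Wilczynski invariants a complete set of fundamental invariants in the variational subclass while they fail to be so in general.
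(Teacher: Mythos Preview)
Your outline correctly identifies the reduction via Theorems~\ref{thm_l2d} and~\ref{cor1} to the statement about the Euler--Lagrange equation, and you are right that the easy direction is immediate. But your converse is not a proof: Steps~1--2 only normalize data at a single solution, and Step~3 is exactly the assertion that has to be proved. Saying that the symmetry $Z$ or the closed form $\omega$ ``must be shown to eliminate every structure function'' is a restatement of the goal, not a mechanism; you give no way to propagate the pointwise normalization along $\Gamma_0$ to an open set, and no argument that variationality kills the non-Wilczynski torsion of the canonical coframe of~\cite{dz1,dz2}. As you yourself note after the Question preceding the theorem, for general rank~2 distributions this implication is open, so something concrete about variational equations has to be used.

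The paper's route is entirely different and avoids the coframe analysis. It invokes the result of~\cite{dou} that an ODE of order $2n$ is contact equivalent to $y^{(2n)}=0$ if and only if all generalized Wilczynski invariants vanish \emph{and} a short list of additional invariants vanishes: $F_{55}=F_{45}=0$ for $n=3$, and $F_{2n-1,2n-1}=F_{2n-1,2n-2}=F_{2n-2,2n-2}=0$ for $n\ge 4$. The variational hypothesis then enters through the Anderson--Thompson structure theorem~\cite{and}: for an Euler--Lagrange equation, $F$ is a polynomial in $y^{(n+1)},\dots,y^{(2n-1)}$ of weighted degree at most $n$ (weights $1,\dots,n-1$). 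For $n\ge 5$ this weighted-degree bound alone forces all the extra invariants to vanish. For $n=3$ and $n=4$ one extra invariant survives ($F_{45}$, respectively $F_{66}$), and the paper disposes of it by an explicit computation: writing out $W_4$ for the Euler--Lagrange equation and differentiating, one finds algebraic syzygies of the form
\[
210W_{355}-105W_{445}+26\,F_{45}\,W_{55}-\tfrac{4}{105}F_{45}^3=0\quad(n=3),\qquad
3W_{75}-2W_{66}+\tfrac{5}{648}F_{66}^2=0\quad(n=4),
\]
so $W_4\equiv 0$ forces $F_{45}=0$ (resp.\ $F_{66}=0$). This is the content of Theorem~\ref{basiclem}. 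The moral is that the ``essential use of variationality'' you were looking for is not a soft symplectic argument but the hard polynomial constraint on $F$ coming from~\cite{and}, combined with a finite explicit check in low order.
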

The proof of the theorem immediately follows from Theorem~\ref{cor1} and the following
\begin{thm}
\label{basiclem}
Let $y^{(2n)}=F(x,y,y',\dots,y^{(2n-1)})$ be the Euler-Lagrange equation of the
Lagrangian $L=f(x,y,y',\dots,y^{(n)}) dx$. This equation is contact
equivalent to the trivial equation $y^{(2n)}=0$ if and only if all its generalized
Wilczynski invariants $W_4, W_6,\dots, W_{2n}$ vanish identically.
\end{thm}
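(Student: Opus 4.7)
The plan is to handle the two directions asymmetrically. The easy direction — contact equivalence to $y^{(2n)}=0$ implies vanishing of all $W_{2k}$ — I would dispose of in one sentence: the generalized Wilczynski invariants are contact invariants by construction, and the linearization of $y^{(2n)}=0$ along any of its solutions is again $y^{(2n)}=0$, which defines a rational normal curve in $\RP^{2n-1}$, so all classical Wilczynski invariants vanish.

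For the converse, my plan is to route the argument through the associated rank 2 distribution. Let $D$ be the distribution on $\JT{n}\times\R$ associated with $L$. First I would invoke Proposition \ref{equivcurve}: for each solution $\Gamma$ of $\E(L)$, the linearization $\mathrm{Lin}^1_\Gamma$ coincides, up to projective transformation, with the Jacobi curve $J^{(1-n)}_\Upsilon$ of the abnormal extremal $\Upsilon=\mathfrak{L}(\Gamma)$ produced by the Legendre transform. Under the hypothesis that all even-order invariants $W_4,\ldots,W_{2n}$ of $\E(L)$ vanish, this translates immediately into the statement that every Jacobi curve of $D$ is a rational normal curve — equivalently, all Jacobi curves of $D$ are flat.

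Next I would appeal to the canonical frame construction from \cite{dz1, dz2} underlying Theorem \ref{thmdz}. The essential structure functions of that frame are built from the Wilczynski invariants of the Jacobi curves and their derivatives along the characteristic direction, so vanishing of all these invariants should reduce the structure equations to those of the flat model distribution, namely the one associated with $(y^{(n)})^2\,dx$. Once $D$ is shown to be equivalent to this model, Theorem \ref{thm_l2d} delivers equivalence of $L$ to $(y^{(n)})^2\,dx$ in the sense of \eqref{Lequiv}, whence the Euler-Lagrange equation of $L$ is contact equivalent to the Euler-Lagrange equation of $(y^{(n)})^2$, which is $y^{(2n)}=0$.

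The hard part will be justifying the phrase ``should reduce'' above — extracting from \cite{dz1, dz2} the precise statement that vanishing of the Wilczynski invariants of all Jacobi curves suffices to trivialize the canonical frame \emph{when $D$ is associated with a Lagrangian}. Here the variational (symplectic) structure of Proposition \ref{sympexist} and Remark \ref{sympind} must do genuine work, since, as emphasized in the introduction, the analogous question for arbitrary $(2,n+3)$-distributions of maximal class remains open for $n\geq 3$. Concretely, I expect one must track how the conformal symplectic form on the quotient $\mathcal{H}/\Fol(\vec H_Z)$ couples to the Jacobi curve invariants in the canonical frame's structure equations, forcing collapse to the flat frame precisely when the Wilczynski invariants vanish identically.
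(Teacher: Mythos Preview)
Your proposal has the logical flow of the paper inverted, and the gap you flag is real and not closable along the lines you suggest.

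In the paper, Theorem~\ref{Wilthm} (the distribution statement) is deduced \emph{from} Theorem~\ref{basiclem} (the ODE statement) together with Theorem~\ref{cor1} and Proposition~\ref{equivcurve}, not the other way around. Your plan is to prove Theorem~\ref{basiclem} by first establishing Theorem~\ref{Wilthm} independently, via the canonical frame of \cite{dz1,dz2}. But the step you label ``should reduce'' is precisely the open \textbf{Question} stated at the end of subsection on Jacobi curves: for general $(2,n+3)$-distributions of maximal class with $n\geq 3$, it is \emph{not known} that flatness of all Jacobi curves forces equivalence to the flat model. The structure equations of the canonical frame in \cite{dz1,dz2} contain torsion terms beyond the generalized Wilczynski invariants, and there is no argument in those papers (or in this one) showing that the extra torsion vanishes once the Wilczynski invariants do. Your hope that the symplectic form $\bar\sigma$ on $\mathcal H/\Fol(\vec H_Z)$ will ``force collapse'' of the frame is not substantiated anywhere and would itself be a substantial new result.

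The paper's actual proof stays entirely on the ODE side and is computational. It invokes the characterization from \cite{dou}: an ODE of order $2n$, $n\geq 3$, is contact-trivializable iff all generalized Wilczynski invariants vanish \emph{and} certain second partials of $F$ in the top derivatives vanish ($F_{55}=F_{45}=0$ for $n=3$; $F_{2n-1,2n-1}=F_{2n-1,2n-2}=F_{2n-2,2n-2}=0$ for $n\geq 4$). The variational hypothesis enters through the Anderson--Thompson structure theorem \cite{and}, which forces $F$ to be a weighted polynomial of degree $\leq n$ in $y^{(n+1)},\dots,y^{(2n-1)}$; a degree count then kills the extra conditions automatically for $n\geq 5$. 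For $n=3$ and $n=4$ the paper writes out $W_4$ explicitly and exhibits algebraic syzygies among its partial derivatives that force $F_{45}=0$ (resp.\ $F_{66}=0$) once $W_4=0$. None of this passes through distributions or Jacobi curves.
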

\begin{proof}
In the sequel we will denote by $F_i$ the partial derivative $F_{y_i}$. The higher order derivatives of $F$ will be denoted in a similar way.
According to~\cite{dou}, any ordinary differential equation of order $2n$,
$n\ge3$, is trivializable if and only if all its generalized Wilczynski
invariants vanish identically and, in addition, the following conditions hold:
\begin{align}
\label{eqn3}
& F_{55}=F_{45} = 0& \qquad&\text{for }n=3;\\
\label{eqn4}
& F_{2n-1,2n-1}=F_{2n-1,2n-2}=F_{2n-2,2n-2}=0&\qquad&\text{for
}n\ge4.
\end{align}
Since our equation is variational, its generalized Wilczynski invariants of odd degree vanish automatically.
The generalized Wilczynski invariants of even degree vanish by assumption of the lemma. So, we need to prove that the above conditions also hold.

Since the equation $y^{(2n)}=F(x,y,y',\dots,y^{(2n-1)})$
is variational, then according to~\cite{and} $F$ is a polynomial in $y^{(n+1)}$, \dots, $y^{(2n-1)}$ of weighted degree $\le n$, where these derivatives have weights $1$, \dots, $n-1$ respectively.
In particular, we see that the polynomials $\big(y^{(2n-1)}\big)^2$,
$y^{(2n-1)}y^{(2n-2)}$, $\big(y^{(2n-2)}\big)^2$  have weighted degree $2n-2$,
$2n-3$ and $2n-4$ respectively. Assume that $n\ge5$. Then $2n-4>n$, and these
terms can not appear in $F$. Thus, the condition~\eqref{eqn4} holds
automatically for $n\ge5$. So, it remains to consider only the cases $n=3,4$.

Let $n=3$. Then the term $y^{(5)}$ has weighted degree $2$, while the function
$f$ is of weighted degree $\le3$. So, we see that $F_{55}=0$. Let us prove that
the condition $W_4=0$ implies also that $F_{45}=0$. The direct computation
shows that $I=F_{45}=-3f_{333}/f_{33}$. Let us denote $W_4$ simply by $W$.
From~\cite{dou} we have:
\begin{multline*}
W =
-\frac{5}{36}F_{5xxx}+\frac{2}{21}F_4F_{55}-\frac{5}{12}F_{3x}+\frac{1}{3}F_{4xx}\\
+\frac{5}{18}F_5F_{5xx}+\frac{5}{36}F_3F_5-\frac{5}{21}F_5^2F_{5x}-\frac{37}{126}F_4F_{5x}\\
+\frac{5}{252}F_5^4+\frac{37}{630}F_4^2+\frac{25}{84}F_{5x}^2+\frac{5}{18}F_2-\frac{5}{18}F_5F_{4x},
\end{multline*}
where $F_i$ denotes the partial derivative by $y^{(i)}$ and $F_x$ denotes the
total derivative. Then the direct computation shows that:
\begin{align*}
W_{55} &= \frac{1}{35}\frac{57 f_{333}^2-35f_{33}f_{3333}}{f_{33}^2};\\
W_{355} &= -\frac{1}{35}\frac{35f_{33}f_{33333} - 149
f_{33}f_{333}f_{3333}+144f_{333}^2}{f_{33}^3};
\\
W_{445} &= -\frac{2}{35}\frac{35f_{33}f_{33333} - 162
f_{33}f_{333}f_{3333}+135f_{333}^2}{f_{33}^3},
\end{align*}
and we have the following syzygy:
\[
210W_{355}-105W_{445}+26IW_{55}-\frac{4}{105}I^3 = 0.
\]
In particular, if $W=0$ we get $I^3=0$ and hence $I=0$. In particular, the
function $f$ is actually quadratic in the highest derivative. This proves the
case $n=3$.

For $n=4$, we see that $F_{77}=F_{76}=0$ due to the weighted degree argument.
So, it remains to prove that $I=F_{66}=-6f_{444}/f_{44}$ vanishes if all
generalized Wilczynski invariants vanish identically. Again, denote by $W=W_4$
the first non-trivial Wilczynski invariant for Euler-Lagrange equation. Then we
have:
\begin{multline*}
W =
\frac{35}{528}F_5F_7+\frac{49}{176}F_7F_{7xx}+\frac{7}{22}F_{6xx}-\frac{35}{176}F_7F_{6x}\\
-\frac{1127}{6336}F_7^2F_{7x}+\frac{161}{3168}F_6F_7^2+\frac{931}{3168}F_{7x}^2+\frac{7}{66}F_4\\
+\frac{47}{1584}F_6^2+\frac{1127}{101376}F_7^4-\frac{329}{1584}F_6F_{7x}-\frac{49}{264}F_{7xxx}.
\end{multline*}
Again, direct computation proves shows that:
\begin{align*}
W_{75} &= -\frac{7}{66}\frac{8f_{44}f_{4444}-13f_{444}^2}{f_{44}^2};\\
W_{66} &= -\frac{1}{198}\frac{252 f_{44}f_{4444}-437 f_{444}^2}{f_{44}^2},
\end{align*}
and we have the following syzygy:
\[
3W_{75}-2W_{66}+\frac{5}{648}I^2=0.
\]
Hence, the equality $W=0$ implies also that $I=0$. Again, we see that vanishing
of the first non-trivial generalized Wilczynski invariant implies that the
function $f$ is actually quadratic in the highest derivative. This completes
the case $n=4$ and the proof of the Lemma.
\end{proof}

\begin{rem} Note that for $n=2$ Theorem \ref{basiclem} does not hold. In particular, the Lagrangian
$L=(y'')^{1/3}\, dx$
has trivial Wilczynski invariants, but the associated Euler-Lagrange equation
$3y''y^{(4)}-5(y''')^2=0$ is not trivializable and has only 6-dimensional
symmetry algebra. On the other hand, Theorem \ref{Wilthm} is valid for $n=2$ and not only for distributions associated with second order Lagrangians but for any rank 2 distribution in $\mathbb R^5$ with the small growth vector $(2,3,5)$.
The rank 2 distributions $D$ corresponding
to the equations $z'=(y'')^{1/3}$ and $z'=(y'')^2$ are equivalent and
have the 14-dimensional symmetry algebra, which
is the maximal possible algebra for distributions under the consideration.
\end{rem}

\begin{rem}
Direct analysis of the symmetry classification of all ordinary differential
equations (see~\cite{olver}) shows that all Euler--Lagrange equations
of order $2n$ with the symmetry algebra of dimension at least $2n+1$
are exhausted (modulo contact transformations) by the following ones:

\begin{center}
\begin{tabular}{|c|c|c|}
\hline
Equation $\E$ & Lagrangian $L$ & $\dim\sym(\E)$ \\
\hline
$y_{2n}=0$ & $y_n^2\,dx$ & $2n+4$\\
\hline
$y^2_n+\sum_{i=0}^{n-1}c_iy_{2i}=0$ &
$\left(y_n^2+\sum_{i=0}^{n-1}c_iy_i^2\right)\,dx$ & $2n+2$\\
\hline
$9y_3^2y_6-45y_3y_4y_5+40y_4^3 = 0$ & $y_3^{1/3}\,dx$ & $7$\\
\hline
\end{tabular}
\end{center}

\smallskip
Note also that we always have $\dim\sym(L)=\dim\sym(\E)+1$.
\end{rem}

\section*{Acknowledgments} We would like to thank Andrei Agrachev, Ian Anderson, Mark Fels, and Eugene Ferapontov for valuable discussions on the subject of this paper.


\begin{thebibliography}{99}
\bibitem{agrsach} A.~Agrachev, Yu.~Sachkov \emph{Control theory from the geometric viewpoint. Encyclopaedia of Mathematical Sciences}, \textbf{87}, Control Theory and Optimization, II. Springer-Verlag, Berlin, 2004. xiv+412 pp.

\bibitem{az02} A.~Agrachev, I.~Zelenko, \emph{Geometry of Jacobi
  curves. I}, Dynam.\ Control syst., \textbf{8} (2002), pp.~93--140.

\bibitem{andkru} I.~Anderson, B.~Kruglikov, \emph{Rank 2 distributions of Monge equations: symmetries, equivalences, extensions}, arXiv:0910.5946v1.

\bibitem{and} I.~Anderson, G.~Tompson, \emph{The inverse problem of the
calculus of variations for ordinary differential equations}, Mem.\ AMS, \textbf{98},
1992.

\bibitem{bryant} R.~Bryant, \emph{On notions of equivalence of variational problems
with one independent variable}, Differential geometry: the interface between pure and
applied mathematics (San Antonio, Tex., 1986), Contemp. Math., \textbf{68} (1987), pp.~65--76.

\bibitem{cartan10} E.~Cartan, \emph{Les systemes de Pfaff a cinque
variables et les equations aux derivees partielles du second ordre},
Ann.\ Sci.\ Ecole Normale \textbf{27} (1910), pp 109-192; see also
\emph{Oeuvres completes, Partie II}, vol.2, Paris, Gautier-Villars,
1953, pp.~927-1010.

\bibitem{cartan24} {\sc E.~Cartan}, {\sl Sur les vari\'et\' es \`a connexion
    projective}, Bull.\ Soc.\ Math.\ France, \textbf{52} (1924),
  pp.~205--241.

\bibitem{chern50} {\sc S.-S.~Chern}, {\sl The geometry of the differential
    equation $y'''=F(x,y,y',y'')$}, Sci.\ Rep.\ Nat.\ Tsing Hua Univ.,
  \textbf{4} (1950), pp.~97--111.

\bibitem{chras} J.~Chrastina, \emph{On the equivalence of variational problems. I},
J.\ Differential Equations, \textbf{98} (1992), pp.~76--90.

\bibitem{chras2} J.~Chrastina, \emph{The formal theory of differential equations}, Masaryk University,
Brno, 1998.

\bibitem{dkm} B.~Doubrov, B.~Komrakov, T.~Morimoto, \emph{Equivalence of
holonomic differential equations}, Lobachevskij Journal of Mathematics,
\textbf{3} (1999), pp.~39--71.

\bibitem{dou} B.~Doubrov, \emph{Contact trivialization of ordinary
  differential equations}, Differential Geometry and Its Applications,
  Proc.\ Conf., Opava (Czech Republic), 2001, Silestian University,
  Opava, pp.~73--84.

\bibitem{douwil} B.~Doubrov, \emph{Generalized Wilczynski invariants for non-linear ordinary
differential equations}, In: Symmetries and Overdetermined Systems of Partial
Differetial Equations, IMA Volume 144, Springer, NY, 2008, pp.~25--40.

\bibitem{dz1} B.~Doubrov, I.~Zelenko, \emph{A canonical coframe of
  non-holonomic rank two distributions of maximal class}, C.R.\ Acad.\
  Sci.\ Paris, ser.~I, Vol.~342, Issue 8, pp.~589--594.

\bibitem{dz2} B.~Doubrov, I.~Zelenko, \emph{On local geometry of nonholonomic rank
2 distributions}, Journal of the London Mathematical Society, V.~80, N.~3,
(2009), pp.~545--566.


\bibitem{fels} M.~Fels, \emph{The inverse problem of the calculus of
variations for scalar fourth-order ordinary differential equations},
Trans.\ Amer.\ Math.\ Soc., \textbf{348} (1996), pp.~5007--5029.

\bibitem{gonz} A.~Gonzalez-Lopez, \emph{Symmetry bounds of variational problems},
J.\ Phys. A, \textbf{27} (1994), pp.~1205--1232.

\bibitem{goursat} E.~Goursat, \emph{Lecons sur le problem de Pfaff}, Hermann, Paris, 1923.

\bibitem{hilbert} D.~Hilbert, \emph{\"Uber den Begriff der Klasse von Differentialgleichungen},
Math.\ Ann. \textbf{73} (1912), pp.~95--108.

\bibitem{juras} M.~Jur\'a\v s, \emph{The inverse
problem of the calculus of variations for sixth- and eighth-order scalar ordinary differential
equations}, Acta Appl.\ Math. \textbf{66} (2001), pp.~25--39.

\bibitem{ko1} N.~Kamran, P.~Olver, \emph{Equivalence of higher order Lagrangians. I.
Formulation and reduction}, J.\ Math.\ Pures Appl., \textbf{70} (1991), pp.~369--391.

\bibitem{ko2} L.~Hsu, N.~Kamran, P.~Olver, \emph{Equivalence of higher-order Lagrangians. II.
The Cartan form for particle Lagrangians}, J.\ Math.\ Phys., \textbf{30} (1989), pp.~902--906.

\bibitem{ko3} N.~Kamran, P.~Olver, \emph{Equivalence of higher-order Lagrangians. III.
New invariant differential equations}, Nonlinearity \textbf{5} (1992), pp.~601--621.

\bibitem{moor} A.~Moor, \emph{\"Uber \"aquivalente Variationsprobleme erster und zweiter
Ordnung},
J.\ Reine Angew.\ Math., \textbf{223} (1966), pp.~131--137.

\bibitem{mori} T.~Morimoto, \emph{Geometric structures on
filtered manifolds}, Hokkaido Math. J., \textbf{22} (1993), pp.~263--347.

\bibitem{nurow} P.~Nurowski, \emph{Differential equations and conformal structures}, J.\ Geom.\ Phys.
\textbf{55} (2005), pp.~ 19-49.

\bibitem{olver} P.J.~Olver, \emph{Symmetry, invariants, and
    equivalence}, New York: Springer--Verlag, 1995.

\bibitem{pbgmthe} L. S. Pontryagin and V. G. Boltyanskii and R. V. Gamkrelidze and E. F. Mischenko,
\emph{The Mathematical Theory of Optimal Processes}, Wiley, New York,1962.

\bibitem{tan1} N.~Tanaka, \emph{On differential systems, graded Lie
  algebras and pseudo-groups}, J.\ Math.\ Kyoto.\ Univ., \textbf{10}
  (1970), pp.~1--82.

\bibitem{tan2} N.~Tanaka, \emph{On the equivalence problems associated with simple graded Lie
algebras}, Hokkaido Math. J., \textbf{6} (1979), pp.~23--84.


\bibitem{wil} E.J.~Wilczynski, \emph{Projective differential geometry of
curves and ruled surfaces}, Leipzig, Teubner, 1905.

\bibitem{yamag} K.~Yamaguchi, \emph{Differential Systems Associated
with Simple Graded Lie Algebras}, Adv. Studies in Pure Math., \textbf{22}
(1993), pp.~413--494.

\bibitem{zel99} I.~Zelenko, \emph{Nonregular abnormal extremals of
  2-distributions: existence, second variation, and rigidity},
  J.~Dynamical and Control Systems, \textbf{5} (1999), pp.~347--383.

\bibitem{zel05} I.~Zelenko, \emph{Complete systems of invariants of rank
1 curves in agrange Grassmanians}, Differential Geom.\ Application,
  Proc.\ Conf.\ Prague, 2005, pp.~365--379.

\bibitem{zel06} I.~Zelenko, \emph{Variational Approach to Differential
  Invariants of Rank 2 Vector Distributions}, Differential Geometry
  and Its Applications, \textbf{24} (2006), pp.~235--259.

\bibitem{zel06c} I. ~Zelenko, \emph{ Fundamental form and Cartan's tensor of  (2,5)-distributions coincide,
J. Dynamical and Control Systems}, Vol.12, No. 2, April 2006, 247-276
\end{thebibliography}
\end{document}